\documentclass[a4paper,10pt,reqno]{amsart}
\usepackage{dsfont,amsmath,amsfonts,amscd,amssymb,graphicx,mathrsfs}

\usepackage[usenames]{color}
\usepackage{setspace}
\usepackage{array,multirow,booktabs,longtable}
\RequirePackage{tikz-cd}
\usepackage{tikz}
\usepackage{array}       

\setcounter{tocdepth}{1}


\addtolength{\hoffset}{-0.5cm} \addtolength{\textwidth}{1cm}
\addtolength{\voffset}{-1.5cm} \addtolength{\textheight}{2cm}



\DeclareMathOperator{\codim}{codim} 
 
\definecolor{lblue}{rgb}{0.3,0.0,4.4}
\definecolor{lred}{rgb}{4.3,0.0,0.4}


\newcommand{\Q}{\mathbb{Q}} 
 
\newcommand{\sm}{\mathrm{sm}}

\renewcommand{\o}{\otimes}

\newcommand{\Z}{\mathbb{Z}}

\newcommand{\pri}{\mathrm{pri}}

\newcommand{\C}{\mathbb{C}}

\newcommand{\mc}{\mathcal}

\newcommand{\ms}{\mathscr}

\newcommand{\ds}{\dots}

\newcommand{\Cs}{\mathbb{C}^{\times}}

\newcommand{\mf}{\mathfrak}

\newcommand{\git}{\ensuremath{/\!\!/\!}}

\newtheorem{thm}{Theorem}[section]
\newtheorem{lem}[thm]{Lemma}
\newtheorem{prop}[thm]{Proposition}
\newtheorem{cor}[thm]{Corollary}

\theoremstyle{definition}
\newtheorem{defn}[thm]{Definition}
\newtheorem{example}[thm]{Example}

\newtheorem{remark}[thm]{Remark}
\newtheorem{question}[thm]{Question}

 \definecolor{lightblue}{rgb}{0.8,0.8,0.9}
  \definecolor{lightred}{rgb}{0.9,0.8,0.8}
\bibliographystyle{plain}

\setcounter{secnumdepth}{2}

\begin{document}

\title{On symplectic resolutions and factoriality of Hamiltonian reductions} 

\author{Gwyn Bellamy}

\address{School of Mathematics and Statistics, University Place, University of Glasgow, Glasgow,  G12 8QQ, UK.}
\email{gwyn.bellamy@glasgow.ac.uk}

\author{Travis Schedler}

\address{Department of Mathematics, Imperial College London, South Kensington Campus, London, SW7 2AZ, UK}
\email{trasched@gmail.com}

\begin{abstract}
  Recently, Herbig--Schwarz--Seaton have shown that $3$-large
  representations of a reductive group $G$ give rise to a large class
  of symplectic singularities via Hamiltonian reduction. We show that
  these singularities are always terminal. We show that they are
  $\Q$-factorial if and only if $G$ has finite abelianization. When
  $G$ is connected and semi-simple, we show they are actually locally
  factorial.  As a consequence, the symplectic singularities do not
  admit symplectic resolutions when $G$ is semi-simple. We end with some
  open questions.
\end{abstract}

\maketitle

 
\section{Introduction}\label{sect:intro}

Hamiltonian reduction is an extremely powerful technique, in both physics and differential geometry, for producing rich new symplectic manifolds from a manifold with Hamiltonian $G$-action. The same technique also works well in the algebraic setting, except that the resulting spaces are often singular, and hence cannot be (algebraic) symplectic manifolds. 

Thanks to Beauville \cite{Beauville}, there is an effective generalization of algebraic symplectic manifold to the singular setting, appropriately called ``symplectic singularities''. Often, these singularities admit symplectic resolutions, i.e., Poisson resolutions of singularities by symplectic varieties.  Such resolutions have  become very interesting from multiple points of view: representation theory (of quantizations), 3-D physical mirror symmetry, algebraic and symplectic geometry, and so on. Note that, in order to admit a symplectic resolution, a variety must be a symplectic singularity, but the converse is not true.

Thus, it is natural to ask if algebraic Hamiltonian reduction gives rise to spaces with symplectic singularities. In general, examples make it clear that the answer is sometimes yes, sometimes no. For instance, examples show that even beginning with a symplectic linear representation of $G$, the resulting Hamiltonian reduction can be non-reduced or reducible; even if reduced and irreducible, it is often not normal. On the other hand many interesting classes of examples, such as Nakajima's quiver varieties,   give rise to symplectic singularities \cite{BellSchedQuiver}.   

Recently, Herbig--Schwarz--Seaton \cite{SchwarzHamred} have shown that linear $G$-representations $V$ satisfying a mild technical condition  (the $3$-large representations) 
give rise to Hamiltonian reductions (of $T^*V$ by $G$) which 
 have symplectic singularities. 

This leads to the natural question:
\begin{center}
Do these symplectic singularities admit symplectic resolutions?
\end{center}
We prove two key results (Theorem \ref{thm:mainshort} and Corollary \ref{cor:nosympres}) in this direction. \\

First, we introduce some notation. Let $G$ be a reductive (possibly disconnected) algebraic group over $\C$ and $V$ a finite dimensional $G$-representation. For each integer $k \ge 0$, one has the notion of a $k$-large representation, which roughly measures the codimension of points where certain undesirable behaviors occur (the orbit is not closed, the stabilizer is not minimal, or the stabilizer has a given positive dimension). We recall it precisely in Definition \ref{d:2large} below.
  In particular, as explained in \cite{SchwarzHamred}, if $G$ is connected and simple then all but finitely many $G$-representations $V$, with $V^G = \{ 0 \}$, are $3$-large; for a more general statement with $G$ connected and semi-simple see \cite[Theorem 3.6]{SchwarzHamred}. 

The representation $W := V \times V^*$ has a canonical $G$-invariant symplectic $2$-form $\omega$ such that the action of $G$ on $W$ is Hamiltonian, with moment map $\mu : W \rightarrow \mf{g}^*$ given by 
$$
\mu(v,\lambda) (x) = \lambda(x \cdot v), \quad \forall \ (v,\lambda) \in V \times V^*,  x \in \mf{g}.  
$$
The associated (algebraic) Hamiltonian reduction is the GIT quotient $\mu^{-1}(0) \git \, G$. We recall from \cite{Beauville} that a variety $X$ is said to be a symplectic singularity if it is normal, its smooth locus has a symplectic $2$-form $\omega$, and for any resolution of singularities $\rho : Y \rightarrow X$, the rational $2$-form $\rho^* \omega$ is regular. Moreover, $\rho$ is said to be a symplectic resolution if the $2$-form $\rho^* \omega$ is also non-degenerate. In particular, this makes $Y$ an algebraic symplectic manifold. 

\begin{thm}[\cite{SchwarzHamred}]\label{thm:hss}
	If $V$ is $3$-large then $\mu^{-1}(0) \git \, G$ is a symplectic singularity. 
\end{thm}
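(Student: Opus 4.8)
The plan is to verify the three requirements in Beauville's definition for $X := \mu^{-1}(0) \git G$: (a) $X$ is normal; (b) $X_{\mathrm{sm}}$ carries a closed non-degenerate $2$-form $\omega$; and (c) for one — equivalently every — resolution $\rho : Y \to X$ the pullback $\rho^*\omega$ is a regular $2$-form. The standard reduction is that, given (a) and (b), condition (c) holds as soon as $X$ has canonical (equivalently, since $X$ will be Gorenstein, rational) singularities: this is the extension theorem for reflexive differential forms on klt base spaces (Namikawa in the symplectic setting; Greb--Kebekus--Kov\'acs--Peternell in general). So the problem becomes: show $X$ is normal, generically symplectic, and has rational Gorenstein singularities.

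First I would study $\N := \mu^{-1}(0) \subseteq W$. The purpose of $3$-largeness (Definition \ref{d:2large}) is to bound the codimension in $W$ of the loci where $d\mu$ fails to be surjective, where $G$-orbits are not closed, or where stabilizer dimensions jump. Combined with the Jacobian criterion and Serre's conditions, these bounds force $\N$ to be a reduced, irreducible, normal complete intersection of dimension $\dim W - \dim G$ inside the smooth variety $W$; in particular $\N$ is Gorenstein with trivial dualizing sheaf, and $X = \Spec\C[\N]^G$ is normal. Only a weak form of largeness is needed here, so there is codimension to spare.

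The crux — where I expect the genuine work, and the full strength of $3$-largeness, to be spent — is to prove $X$ has rational singularities. One natural route is to establish rational singularities for $\N$ itself and then apply Boutot's theorem (if $R$ has rational singularities then so does $R^G$, for $G$ reductive) to $R = \C[\N]$; proving $\N$ has rational singularities must use more than the complete-intersection/codimension input above, presumably exploiting the moment-map structure together with the conical (homogeneous) structure of $\N$. An alternative is to argue on $X$ directly: largeness stratifies $X$ into symplectic leaves with bad strata of high codimension, and one can attempt an induction on dimension via a local product decomposition transverse to a leaf — each transverse slice being itself a smaller linear Hamiltonian reduction — with the symplectic form pinning down discrepancies along exceptional loci. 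Some additional care is presumably also needed when $G$ is disconnected. In any case, this is the step most likely to be technical.

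Finally, the symplectic form. On the open locus $X^\circ \subseteq X$ that is the image of the set of points of $\N$ with closed $G$-orbit, trivial principal stabilizer, and surjective $d\mu$, classical Marsden--Weinstein reduction produces a symplectic form $\omega$, and largeness makes the complement $X \setminus X^\circ$ of codimension $\geq 2$. Since $X$ is normal, $\omega$ extends to a reflexive $2$-form $\omega_X$; and because $X \setminus X^\circ$, hence also $X_{\mathrm{sm}} \setminus X^\circ$, has codimension $\geq 2$, the top exterior power of $\omega_X$ cannot vanish anywhere on $X_{\mathrm{sm}}$, so $\omega_X|_{X_{\mathrm{sm}}}$ is symplectic and trivializes the dualizing sheaf of $X$. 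Hence $X$ is Gorenstein with $K_X \sim 0$; being also rational, it has canonical singularities, and the extension theorem then yields regularity of $\rho^*\omega$ on every resolution $\rho : Y \to X$, completing the verification that $X$ is a symplectic singularity.
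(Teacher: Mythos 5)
First, a point of orientation: this theorem is imported, not proved, in the paper. The paper's entire ``proof'' consists of the observation that the statement is \cite[Theorem 1.1]{SchwarzHamred}, once one checks via \cite[Lemma 2.8]{SchwarzHamred} that the notion of Hamiltonian reduction used there agrees with $\mu^{-1}(0) \git \, G$ when $V$ is $2$-large. Your proposal instead sets out to reprove the Herbig--Schwarz--Seaton theorem itself. Your overall reduction is the standard one and is essentially the skeleton of their argument: a complete-intersection analysis of $\mu^{-1}(0)$ giving reducedness, irreducibility and normality; a Marsden--Weinstein form on the principal locus whose complement has codimension at least two, hence a trivialization of the dualizing sheaf; and then Namikawa's criterion together with the extension theorem for reflexive forms, once rational Gorenstein singularities are known.

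The genuine gap is exactly where you locate it: you never prove that $X$ (or $\N := \mu^{-1}(0)$) has rational singularities, and everything else in your argument is comparatively soft. ``Presumably exploiting the moment-map structure'' and ``one can attempt an induction'' are placeholders for the step that carries all the weight and consumes the full strength of the largeness hypothesis. Neither of your suggested routes is carried out. Boutot reduces the problem to rational singularities of the complete intersection $\N$, which is not automatic (normal complete intersection does not imply rational), and would itself require something like Flenner's graded criterion plus control of the non-rational locus; your second route --- induction via transverse slices, each a smaller linear Hamiltonian reduction --- is in fact close to what Herbig--Schwarz--Seaton actually do, but as written it is a research plan rather than a proof. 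So the proposal should be read as an accurate outline with the decisive step missing; to be fair, that is precisely the step the paper itself outsources entirely to \cite{SchwarzHamred}. One small inaccuracy: the principal isotropy group is the finite kernel $K$ of the action rather than the trivial group, so the symplectic form on the principal open set arises from reduction by $G/K$; this does not affect the argument.
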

	    
\begin{proof}
	The definition of Hamiltonian reduction used in \cite{SchwarzHamred} is different from the one given above. However, it follows from \cite[Lemma 2.8]{SchwarzHamred} that the two definitions coincide if $V$ is $2$-large. Therefore the result follows from \cite[Theorem 1.1]{SchwarzHamred}.
\end{proof}

Recall that the \emph{abelianization} of $G$ is $G_{\text{ab}} := G/[G,G]$. The group $G$ is called \emph{perfect} if $G_{\text{ab}}=\{1\}$, i.e., $G=[G,G]$. We will show (Corollary \ref{cor:QfactorialHamred} and Proposition \ref{prop:terminal}):

\begin{thm}\label{thm:mainshort}
	Let $G$ be a reductive group acting on a $3$-large representation $V$ and $X := \mu^{-1}(0) \git \, G$ the associated Hamiltonian reduction. 
	\begin{enumerate}
		\item[(a)] $X$ has terminal singularities. 
		\item[(b)] $X$ is $\Q$-factorial if and only if $G_{\text{ab}}$ is finite.
		\item[(c)] If $G$ is perfect then $X$ is locally factorial.
	\end{enumerate} 
\end{thm}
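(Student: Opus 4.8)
The plan is to analyze the singularities of $X = \mu^{-1}(0)\git G$ by exploiting the codimension estimates packaged into the $3$-largeness hypothesis, and then to import standard results from the minimal model program. I would proceed in three stages, corresponding to the three parts of the theorem.

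For part (a), the key point is that $X$ is already known to be a symplectic singularity (Theorem~\ref{thm:hss}), and for symplectic singularities there is a clean criterion for terminality: a symplectic singularity is terminal precisely when its singular locus has codimension at least $4$. So I would first establish that $\mu^{-1}(0)$ is a reduced, irreducible complete intersection of the expected dimension (this should follow from $2$-largeness, since the locus where $\mu$ fails to be flat or where the fiber is singular is controlled by the stabilizer stratification), and then use the $3$-largeness to bound the codimension in $X$ of the image of the non-free/non-closed-orbit locus, as well as the codimension of the singular locus of the GIT quotient coming from the quotient map itself. The numerics of ``$3$-large'' are exactly designed so that these bad loci have codimension $\ge 4$ in $X$; combined with the symplectic singularity property this yields terminality. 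I would cite Namikawa's criterion (terminal $\iff$ codimension $\ge 4$ singular locus for symplectic singularities) for the final implication.

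For parts (b) and (c), the strategy is to use the description of the divisor class group of a GIT quotient in terms of $G$-equivariant divisor classes on $\mu^{-1}(0)$, together with the fact that $\mu^{-1}(0)$ itself is ``nice'' (a normal, indeed factorial or at least locally factorial, complete intersection) once $V$ is $3$-large. The relevant mechanism is: for a reductive group $H$ acting on a factorial affine variety $Z$ with good quotient $Z\git H$, the class group of $Z\git H$ is computed by an exact sequence involving the character group $X^*(H)$ (Knop--Kraft--Vust, or the analysis via descent of line bundles). Concretely, $\mathrm{Cl}(X)$ should fit into a sequence whose relevant term is the group of characters of $G$ that are trivial on generic stabilizers, i.e., essentially $X^*(G_{\mathrm{ab}})$ up to finite correction. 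When $G_{\mathrm{ab}}$ is finite, $X^*(G)$ is finite, forcing $\mathrm{Cl}(X)$ to be torsion, which is exactly $\Q$-factoriality; conversely an infinite $G_{\mathrm{ab}}$ produces a non-torsion character giving a non-torsion Weil divisor class. When $G$ is perfect, $X^*(G) = 1$, so every line bundle on $\mu^{-1}(0)$ descends and $\mathrm{Cl}(X) = 0$ locally, giving local factoriality — here I also need that $\mu^{-1}(0)$ is locally factorial, which again follows from the codimension estimates (its singular locus has high codimension in a complete intersection, so Grothendieck's parafactoriality / Samuel's conjecture-type results apply, or one argues directly that a complete intersection which is regular in codimension $\le 3$ is factorial).

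The main obstacle I anticipate is making precise the comparison between $\mathrm{Cl}(X)$ and the character-theoretic data: one must check that there are no extra contributions to the class group coming from the non-free locus of the $G$-action on $\mu^{-1}(0)$, and this is where the $3$-largeness (rather than merely $2$-largeness) is essential — it guarantees that the locus with positive-dimensional stabilizer, and the locus with non-minimal finite stabilizer, each have codimension large enough that they do not affect the Picard/class group computation. Controlling these strata, and correctly handling the disconnectedness of $G$ (so that ``characters'' must be interpreted via $G_{\mathrm{ab}}$ which may have a nontrivial finite part contributing only torsion), is the technical heart of the argument; the MMP inputs and the symplectic-singularity criterion are then essentially black boxes.
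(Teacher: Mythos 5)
Your proposal is correct and follows essentially the same route as the paper: part (a) via Namikawa's criterion (terminal $\iff$ singular locus of a symplectic singularity has codimension $\ge 4$) combined with the codimension bounds supplied by $3$-largeness, and parts (b), (c) via factoriality of $\mu^{-1}(0)$ (Avramov/Grothendieck for complete intersections regular in low codimension) together with descent of $G$-equivariant line bundles, where the obstruction is the stabilizer action on fibers and hence is controlled by $X^*(G)\cong X^*(G_{\mathrm{ab}})$. The technical points you flag --- controlling the non-principal and non-free loci so they do not contribute to the class group, and producing a non-torsion class from a surjective character when $G_{\mathrm{ab}}$ is infinite --- are exactly the ones the paper resolves with its Drezet-type descent theorem and the regular-sequence argument for the codimension of the non-principal locus.
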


In particular, if $G$ is connected and semi-simple then $X$ is locally factorial. Remarkably, the theorem provides an example of a symplectic singularity that is not $\Q$-factorial, but whose quotient by $\Z_2$ is $\Q$-factorial; see example \ref{ex:strangleexample}. 


\begin{cor}\label{cor:nosympres}
	Let $V$ be a $3$-large representation of $G$. If $G_{\text{ab}}$ is finite and $G$ acts non-trivially on $V$, then the symplectic singularity $X$ does not admit a symplectic resolution. 
\end{cor}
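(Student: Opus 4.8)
The plan is to combine the local factoriality / $\Q$-factoriality from Theorem \ref{thm:mainshort} with the standard fact that a projective symplectic resolution of an affine symplectic singularity must be an isomorphism over the locus where the base is already $\Q$-factorial-terminal. More precisely, suppose $\rho : Y \to X$ were a symplectic resolution. Since $X = \mu^{-1}(0)\git G$ is affine, $\rho$ is automatically a projective birational morphism, and $Y$ is smooth with $\rho^*\omega$ a symplectic form. By Theorem \ref{thm:mainshort}(a), $X$ has terminal singularities, and by (b), the hypothesis that $G_{\text{ab}}$ is finite gives that $X$ is $\Q$-factorial. A $\Q$-factorial terminal variety admits no nontrivial crepant (in particular no nontrivial symplectic, since symplectic resolutions are crepant) projective birational morphism from a smooth variety: the discrepancy of every $\rho$-exceptional divisor would be both $0$ (crepancy) and $>0$ (terminality), forcing $\rho$ to be small; but a small projective birational morphism onto a $\Q$-factorial base is an isomorphism. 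Hence $\rho$ is an isomorphism, i.e. $X$ is already smooth.

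It then remains to rule out $X$ being smooth when $G$ acts nontrivially on $V$. Here I would argue that a smooth $X$ of the expected dimension $2(\dim V - \dim G)$ would be too well-behaved to be a Hamiltonian reduction of a $3$-large representation with nontrivial action: the $3$-large hypothesis guarantees (via the stratification by stabilizer type used in \cite{SchwarzHamred}) that $\mu^{-1}(0)$ has closed orbits with nontrivial, even positive-dimensional, stabilizers in codimension $\geq 3$ after taking the quotient, and the corresponding points of $X$ are genuine singularities — for instance the image of a point with reductive stabilizer $H \subsetneq G$ has an étale-local model given by a Hamiltonian reduction for $H$ on the slice representation, which is singular since the slice is again (at least) $2$-large and the reduction is not a point. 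Alternatively, and perhaps more cleanly, one invokes that $X$ is a cone (it carries a contracting $\Cs$-action rescaling $W$, with the origin as unique fixed point mapping to a single point $o \in X$), so if $X$ were smooth it would be an affine space $\A^{2m}$; but then its algebra of functions would be a polynomial ring, contradicting the known structure of $\C[\mu^{-1}(0)]^G$ when the action is nontrivial (e.g. it has a nontrivial grading with the wrong Hilbert series, or one checks $\dim T_o X > \dim X$).

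The main obstacle is the second half: cleanly showing $X$ is singular. The conical argument reduces this to showing $X \not\cong \A^{2m}$, and the most robust way is a dimension count on the Zariski tangent space at the cone point $o$, or equivalently showing $\C[\mu^{-1}(0)]^G$ is not a polynomial ring — this should follow because $\C[V \times V^*]^G$ already fails to be polynomial once $G$ acts nontrivially on a $3$-large $V$ (the ring of invariants of a nontrivial representation that is $3$-large is never polynomial, as $3$-largeness forces $\dim V$ large relative to $\dim G$ while a polynomial invariant ring would force, via Shephard--Todd type constraints, very restrictive structure), and the moment map relations cut this down but cannot restore polynomiality while keeping terminality. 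I would present this as: $o \in X$ is a symplectic singularity with $\dim X > 0$ and (by terminality and the conical structure) if it were a smooth point then $X = \A^{2m}$, which is excluded since the symplectic form on a linear Hamiltonian reduction with nontrivial $G$-action is not the standard one on affine space — concretely because $X$ then contains the positive-dimensional singular strata forced by $3$-largeness. The cross-reference to Theorem \ref{thm:mainshort} and Theorem \ref{thm:hss} does all the heavy lifting for the factoriality input; only the "not smooth" step needs genuinely new (but short) input.
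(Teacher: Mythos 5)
Your first step --- ruling out a non-trivial resolution when $X$ is $\Q$-factorial and terminal --- is correct and is essentially the paper's argument: the paper phrases it via van der Waerden purity (the exceptional locus of a birational morphism onto a $\Q$-factorial variety is a divisor) plus the fact that crepant resolutions of terminal singularities have exceptional locus of codimension at least two, whereas you phrase it as ``no crepant exceptional divisors, hence small, hence an isomorphism over a $\Q$-factorial base''; these are the same fact. The problem is your second step, showing that $X$ is actually singular. What you offer there is not a proof: the assertion that ``the ring of invariants of a nontrivial $3$-large representation is never polynomial, via Shephard--Todd type constraints'' is unjustified (Shephard--Todd concerns finite reflection groups; for reductive $G$ there are plenty of non-trivial representations with polynomial invariant rings, e.g.\ adjoint representations by Chevalley's theorem), the passage from non-polynomiality of $\C[V\times V^*]^G$ to non-polynomiality of $\C[\mu^{-1}(0)]^G$ is not addressed, and the claim that the \'etale-local model at a point with non-minimal stabilizer ``is singular since the slice is again $2$-large'' assumes exactly what is to be proved. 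So as written the ``$X$ is not smooth'' step is a genuine gap.

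The paper closes this gap with a two-line argument (Lemma \ref{lem:2largeXsing}) that you should adopt: since $G$ acts non-trivially on $V$, the stabilizer of $0\in\mu^{-1}(0)$ is all of $G$, which is strictly larger than the principal isotropy group (the kernel of the action, by \cite[Corollary 7.7]{Schwarz}); hence the cone point $o=\xi(0)$ does not lie in $X_{\mathrm{pri}}$, so $X_{\mathrm{pri}}\neq X$. But \cite[Theorem 3.21]{SchwarzHamred} gives $X_{\mathrm{sm}}=X_{\mathrm{pri}}$ for $3$-large $V$, so $X$ is singular. This uses only the identification of the smooth locus with the principal locus --- no Hilbert series, tangent-space, or invariant-theoretic polynomiality input is needed, and your conical reduction to ``$X\not\cong\C^{2m}$'' becomes unnecessary.
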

Note that, if the connected component $G^\circ$ of the identity is semi-simple,  $G_{\text{ab}}$ is a quotient of $\pi_0(G)$, which is finite. Also, when $\dim G > 0$, the assumption that $G$ acts non-trivially on $V$ is unnecessary, as it follows from the $3$-large property. 
\begin{proof}[Proof of Corollary \ref{cor:nosympres}]
	As we show in Lemma \ref{lem:2largeXsing} below, the fact that $V$ is $3$-large and $G$ acts non-trivially on $V$ forces $X$ to be singular. The fact that $X$ is $\Q$-factorial by Theorem \ref{thm:mainshort}, together with van der Waerden purity, implies that if $\rho : Y \rightarrow X$ is a symplectic resolution then the exceptional locus on $Y$ is a divisor. But since $X$ has terminal singularities, any
crepant resolution must have exceptional locus of codimension at least two. This contradicts the fact that every symplectic resolution is crepant.
\end{proof}	

Theorem \ref{thm:mainshort}, combined with Namikawa's result \cite[Theorem 5.5]{Namikawa}, implies that:

\begin{cor}
	If $G_{\text{ab}}$ is finite, then all Poisson deformations of the reduction $X$ are locally trivial as ordinary deformations. In particular, the singularities cannot change under Poisson deformation. 
\end{cor}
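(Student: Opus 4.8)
The plan is to invoke Namikawa's theorem \cite[Theorem 5.5]{Namikawa}, which asserts that for an affine symplectic variety $X$ with no codimension-one symplectic leaves — equivalently, whose singular locus has codimension $\geq 4$ — every Poisson deformation over a local base is locally trivial as a flat deformation of $X$ (i.e.\ étale-locally on $X$ it is a trivial deformation, even though the Poisson structure itself deforms). So the only thing to check is that the hypothesis of that theorem is satisfied for our $X = \mu^{-1}(0)\git G$ when $G_{\mathrm{ab}}$ is finite. First I would recall that by Theorem \ref{thm:hss}, $X$ is a symplectic singularity, hence normal with symplectic smooth locus, and in particular $\mathrm{Sing}(X)$ has codimension $\geq 2$. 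The point is to upgrade this to codimension $\geq 4$.

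The key step is therefore the following codimension estimate: when $X$ is $\Q$-factorial (which holds by Theorem \ref{thm:mainshort}(b) precisely because $G_{\mathrm{ab}}$ is finite), the singular locus of $X$ has codimension at least $4$. I would deduce this from the terminality of $X$ (Theorem \ref{thm:mainshort}(a)) together with the general structure theory of symplectic singularities: by Kaledin's theorem, the symplectic singularity $X$ admits a stratification by symplectic leaves, and the normal slice to a codimension-two leaf is (the formal neighbourhood of) a Du Val surface singularity times a symplectic vector space. A $\Q$-factorial terminal symplectic singularity cannot contain such a leaf: a Du Val singularity is a canonical but \emph{non-terminal} surface singularity, and it is not $\Q$-factorial unless it is of type $A_1$ — but even the $A_1$ case is excluded since $\C^2/\Z_2$ is not terminal (terminal surface singularities are smooth). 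Hence there are no codimension-two leaves, and since the complement of the open leaf is a union of the remaining (even-codimensional, by Kaledin) leaves, $\mathrm{Sing}(X) = X \setminus X_{\mathrm{reg}}$ has codimension $\geq 4$. In fact one may phrase this more directly: for a normal variety, $\Q$-factoriality forces $\mathrm{Sing}(X)$ to have codimension $\geq 3$ is false in general, so the symplectic/terminal input is genuinely needed — the cleanest route is the Du Val slice argument just sketched, which is standard in this area and can be cited.

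With the codimension-$\geq 4$ property in hand, Namikawa's theorem applies verbatim and gives that all Poisson deformations of $X$ are locally trivial as ordinary (flat) deformations; in particular the analytic/formal isomorphism type of the singularities of $X$ is constant along any Poisson deformation, which is the final sentence of the corollary. I expect the main obstacle to be purely expository: making precise, with the right citations, the passage from ``$\Q$-factorial and terminal symplectic singularity'' to ``no codimension-two symplectic leaves, hence $\operatorname{codim}\mathrm{Sing}(X)\geq 4$.'' Once that lemma is stated cleanly, the corollary is an immediate application of \cite[Theorem 5.5]{Namikawa}, and no further computation is required.
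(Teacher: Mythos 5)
There is a genuine gap, and it lies in your statement of the key citation. Namikawa's \cite[Theorem 5.5]{Namikawa} does \emph{not} say that local triviality of Poisson deformations follows from $\operatorname{codim}\mathrm{Sing}(X)\geq 4$ (equivalently, from terminality of the symplectic singularity, by \cite{NamikawaNote}); its hypothesis is that $X$ is a conical affine symplectic variety which is \emph{$\Q$-factorial and terminal}, and $\Q$-factoriality enters as an independent, essential assumption, not merely as a device for bounding the codimension of the singular locus. Your argument uses $\Q$-factoriality only to rederive $\operatorname{codim}\mathrm{Sing}(X)\geq 4$ --- which in fact already follows from terminality alone, i.e.\ from part (a) of Theorem \ref{thm:mainshort}, without any mention of $G_{\mathrm{ab}}$. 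If your reading of Theorem 5.5 were correct, the hypothesis ``$G_{\mathrm{ab}}$ finite'' in the corollary would be superfluous. It is not: Example \ref{ex:strangleexample} exhibits $X=\mu^{-1}(0)\git\,\Cs$ isomorphic to the closure of the minimal nilpotent orbit in $\mathfrak{sl}_{2n}$, which is a terminal symplectic singularity with isolated (hence codimension $\geq 4$) singular locus, yet is not $\Q$-factorial, admits the symplectic resolution $T^*\mathbb{P}(V)$, and is smoothable by Poisson deformation (deforming to semisimple coadjoint orbits). So the codimension condition alone cannot imply the conclusion.

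The intended proof is a one-liner: by Theorem \ref{thm:mainshort}, parts (a) and (b), when $G_{\mathrm{ab}}$ is finite the reduction $X$ is a $\Q$-factorial terminal symplectic singularity carrying a contracting $\Cs$-action, which is precisely the class of varieties to which \cite[Theorem 5.5]{Namikawa} applies; that theorem then gives local triviality of all Poisson deformations as flat deformations. Two smaller points: your ``equivalently, no codimension-one symplectic leaves'' is vacuous, since symplectic leaves have even codimension; and your Du Val slice argument, while correct, only reproves the direction ``terminal $\Rightarrow$ no codimension-two leaves'' of \cite{NamikawaNote} and never actually needs $\Q$-factoriality, which is the symptom of the misplacement of that hypothesis.
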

In Section \ref{s:disconn}, we explain how the above results
generalize, for finite groups, to the case when $V$ is not linear. One
simple consequence is that, by considering the finite quotient
$(\mu^{-1} \git \, G^\circ) / (G/G^\circ)$, one can reduce Theorem
\ref{thm:mainshort}.(b) to the connected case (although we do not need
this).

 Finally, in Section \ref{s:openq}, we present some open
 questions. For example, for $\dim G > 0$, can one generalize the
 results above to the case where $V$ is not linear? What happens if
 one replaces the affine quotient by a GIT quotient, or when one takes
 Hamiltonian reduction at a nonzero character of $\mathfrak{g}$?

\section{Hamiltonian reductions}

We assume throughout this section that $G$ is a reductive (possibly disconnected) algebraic group over $\C$. Let $N$ be an irreducible affine $G$-variety. Let $k = \min \{ \dim G_x : G \cdot x \textrm{ is closed} \}$ and let $l$ be the minimum number of connected components of $G_x$ as $x$ ranges over all points of $N$ with $G \cdot x$ closed and $\dim G_x = k$. Write $N'$ for the set of all points in $N$ such that the number of connected components of $G_x$ is $l$, the orbit $G \cdot x$ is closed, and $\dim G_x = k$. The orbits in $N'$ are the \textit{principal orbits}. Following G.~Schwarz, when $k=0$ we say that $V$ has finite principal isotropy groups (FPIG). If the categorical quotient of $N$ is $\xi: N \rightarrow X := N \git \, G$, then $X_{\mathrm{pri}} := \xi(N')$ and $N_{\mathrm{pri}} := \xi^{-1}(X_{\mathrm{pri}})$. Since $X$ is irreducible,  $X_{\mathrm{pri}}$ is open and dense in $X$, and it is a consequence of Luna's slice theorem that the isotropy groups of all points in $N'$ are conjugate; see \cite[\S 1.4]{Schwarz}.  These groups are called \emph{principal isotropy groups}.

Let us recall the definition of $k$-large representations (restated slightly from  \cite[\S 2.1]{SchwarzHamred}):
\begin{defn} \label{d:2large}
	A representation $V$ of $G$ is $k$-large if: 
	\begin{itemize}
		\item $V$ has FPIG;
		\item $\codim V \setminus V' \geq k$ (``$k$-principal'')
		\item $\codim V_{(r)}\geq r+k$ for $1 \leq r \leq \dim G$, where $V_{(r)} := \{v \in V \mid \dim G_v = r\}$ (``$k$-modular'').
	\end{itemize}
\end{defn}

Observe that if $V$ has FPIG, then $V_{\mathrm{pri}}=V'$ consists precisely of the principal orbits.   We will need the following result.
 \begin{lem}\label{lem:VpriVstarVVstarpri}
  If $V$ is $k$-large for $k \geq 2$  then $V \times V^*$ is $2k$-large.
Moreover, $V' \times V^* \subseteq (V \times V^*)'$; similarly
 $V \times (V^*)' \subseteq (V \times V^*)'$.
 \end{lem}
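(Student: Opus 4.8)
The plan is to verify the three defining conditions of a $2k$-large representation for $W := V \times V^*$ directly, by comparing the relevant "bad" loci in $W$ with those in $V$. Throughout, I identify $W$ with $V \oplus V^*$ and note that for $w = (v,\lambda)$ one has $G_w = G_v \cap G_\lambda$, where $G_\lambda$ is the stabilizer of $\lambda$ for the contragredient action; in particular $\dim G_w \le \min(\dim G_v, \dim G_\lambda)$.

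\textbf{Step 1: FPIG and the "moreover" claim.} Since $V$ has FPIG, a generic $v \in V$ has finite stabilizer, hence a generic $w = (v,\lambda) \in W$ has finite stabilizer, so $W$ has FPIG too. For the "moreover" statement I would argue that if $v \in V'$ (so $G\cdot v$ is closed, $G_v$ is a principal — in particular minimal finite — isotropy group) then for $\lambda$ in the dense open set $V^*_{\mathrm{pri}}$ the point $(v,\lambda)$ has stabilizer $G_v \cap G_\lambda \subseteq G_v$; one checks that this intersection is again a principal isotropy group of $W$ and that $G\cdot(v,\lambda)$ is closed (a product of a closed orbit with a point lying over the base, using that the projection $W \to V$ is $G$-equivariant and $G\cdot v$ is closed, together with closedness of the orbit through $\lambda$). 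Since $(V\times V^*)'$ is exactly the union of orbits of minimal finite stabilizer with closed orbit and minimal number of components, and the generic such behaviour is already achieved on $V' \times V^*_{\mathrm{pri}}$, I get $V' \times V^* \cap (\text{generic }\lambda) \subseteq W'$; the full inclusion $V' \times V^* \subseteq W'$ should follow because being in $W'$ is detected fibrewise and $G_v$ already equals the principal isotropy group. The symmetric statement for $V \times (V^*)'$ is identical.

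\textbf{Step 2: the $k$-modular condition for $W$.} Fix $1 \le r \le \dim G$. I need $\codim_W W_{(r)} \ge r + 2k$, where $W_{(r)} = \{w : \dim G_w = r\}$. If $(v,\lambda) \in W_{(r)}$ then $\dim G_v \ge r$ and $\dim G_\lambda \ge r$, so $W_{(r)} \subseteq \bigcup_{s \ge r}\bigl(V_{(s)} \times (V^*)_{\ge r}\bigr)$ and symmetrically; more usefully, $W_{(r)} \subseteq \bigl(\bigcup_{s\ge r} V_{(s)}\bigr) \times \bigl(\bigcup_{t \ge r}(V^*)_{(t)}\bigr)$. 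By the $k$-modular hypothesis on $V$ (and the fact that $V^*$ is $k$-large with the same codimension bounds, since the invariant-theoretic data of $V$ and $V^*$ agree), $\codim_V \bigcup_{s\ge r}V_{(s)} \ge r+k$ and likewise for $V^*$. Hence $\codim_W W_{(r)} \ge (r+k) + (r+k) = 2r + 2k \ge r + 2k$. This is the cleanest of the three checks.

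\textbf{Step 3: the $k$-principal condition for $W$.} I must show $\codim_W(W \setminus W') \ge 2k$. Decompose $W \setminus W'$ according to where the failure occurs. The locus where $\dim G_w > 0$ is contained in $\bigcup_{r\ge 1} W_{(r)}$, which by Step 2 has codimension $\ge 1 + 2k > 2k$. On the complement (where $G_w$ is finite), a point fails to be principal only if its orbit is not closed or its stabilizer is not of minimal order/component count. I would bound this non-principal-with-finite-stabilizer locus using the inclusion: if $(v,\lambda)$ is "bad" in this sense, then I claim either $v \in V\setminus V'$ or $\lambda \in V^*\setminus (V^*)'$, \emph{or} both $v,\lambda$ are principal but the orbit $G\cdot(v,\lambda)$ is not closed. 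For the first alternatives, the bad locus is contained in $(V\setminus V')\times V^* \cup V \times (V^*\setminus(V^*)')$, each factor having codimension $\ge k$ in the respective factor, hence codimension $\ge k$ in $W$; I then need to improve $k$ to $2k$. This is where the "moreover" part of Step 1 does the work: on $V'\times V^*$ the bad locus is contained in $V' \times (V^*\setminus (V^*)')$, which has codimension $\ge k$ \emph{inside} $V'\times V^*$, and combined with the codimension-$k$ estimate for $V\setminus V'$ this should stratify $W\setminus W'$ into pieces each of codimension $\ge 2k$ — concretely, $W \setminus W' \subseteq \bigl((V\setminus V')\times V^*\bigr) \cup \bigl(V \times (V^*\setminus(V^*)')\bigr)$ is not enough by itself, so I instead use that a point with $v \in V'$ and $\lambda \in (V^*)'$ and closed orbit is automatically in $W'$, reducing to the union of $(V\setminus V')\times (V^*\setminus(V^*)')$ (codimension $\ge 2k$) with the closure of the non-closed-orbit locus over principal points.

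\textbf{Main obstacle.} The genuinely delicate point is Step 3, specifically controlling the locus of points $(v,\lambda)$ with $v,\lambda$ both principal (finite, minimal stabilizers) but with $G\cdot(v,\lambda)$ not closed, and showing its codimension in $W$ is at least $2k$. The orbit $G\cdot v$ and $G\cdot\lambda$ are individually closed there, but closedness of the product orbit is a genuinely stronger condition, and this is exactly the phenomenon that the $k$-principal hypothesis is designed to handle on $V$ and $V^*$ separately. I expect one must invoke Luna's slice theorem / the Hilbert–Mumford criterion: at a principal point $v$, a slice description of $W$ near $G\cdot(v,\lambda)$ identifies the non-closed-orbit locus with a corresponding locus for the slice representation of the (finite) principal isotropy group, and then a counting/transversality argument combining the two codimension-$k$ bounds for the $V$- and $V^*$-directions yields the codimension-$2k$ estimate. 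Handling this carefully, and making the "moreover" inclusions precise enough to feed into it, is the crux of the argument; the FPIG and $k$-modular checks are essentially bookkeeping.
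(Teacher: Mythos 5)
Your skeleton is right --- the modularity check via $\codim W_{(r)} \ge 2(r+k)$ matches the paper, and the reduction of the $2k$-principal property to the inclusion $V' \times V^* \subseteq (V\times V^*)'$ (so that $W\setminus W' \subseteq (V\setminus V')\times(V^*\setminus (V^*)')$ has codimension $\ge 2k$) is exactly the intended strategy. But the crux, which you explicitly defer as the ``main obstacle,'' is a genuine gap: you never prove that points $(v,\lambda)$ with $v \in V'$ and $\lambda$ \emph{arbitrary} have closed orbit and principal stabilizer. Your Step 1 only treats $\lambda$ in a dense open subset of $V^*$ and then asserts the full inclusion ``should follow because being in $W'$ is detected fibrewise''; it does not. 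Moreover your proposed repair via Luna slices / Hilbert--Mumford for the locus of non-closed orbits over principal points is not carried out, and the stratification in Step 3 still needs to know what the principal isotropy group of $W$ actually is before you can say that ``$v,\lambda$ both principal with closed orbit'' implies membership in $W'$.

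The missing ingredient --- and the reason the hypothesis $k \ge 2$ appears in the statement --- is Schwarz's result \cite[Corollary 7.7]{Schwarz}: for a $k$-large representation with $k \ge 2$, the principal isotropy groups $G_v$, $v \in V_{\mathrm{pri}}$, all \emph{equal} the kernel $K$ of the action. This kills both difficulties at once. First, for $v \in V'$ and any $\lambda \in V^*$ one has $K \le G_{(v,\lambda)} \le G_v = K$, so the stabilizer is exactly $K$ for \emph{every} $\lambda$, not just generic ones; since $K$ is the kernel of the $G$-action on $W$, it is automatically the minimal possible stabilizer, so no separate identification of the principal isotropy group of $W$ is needed. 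Second, closedness of $G\cdot(v,\lambda)$ follows by pure dimension count: every orbit in $V'\times V^*$ has dimension $\dim G - \dim K = \dim G\cdot v$, while a boundary orbit of $G\cdot(v,\lambda)$ would project into the closed orbit $G \cdot v$, hence (after translation) be of the form $G\cdot(v,\lambda')$ and of strictly smaller dimension --- a contradiction. No slice theorem or transversality argument is required. Without citing this corollary (or reproving it), your argument does not close.
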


\begin{proof}
  Note that, for $v \in V$ and $f \in V^*$, we have
  $\dim G_{(v,f)} \leq \text{min} \{\dim G_v, \dim G_f\}$. Thus the
  $k$-modularity of $V$ implies $2k$-modularity of $V \times V^*$ (in
  fact, $(2k+1)$-modularity).  To prove the $2k$-principal property and the FPIG condition, it suffices to prove the final assertion,
  which we do in the remainder of the proof. (This also shows that we can replace the primes by subscripts ``pri''.)

 Since $V$ is $k$-large with $k \ge 2$, it follows from \cite[Corollary 7.7]{Schwarz}
 that the principal isotropy groups $G_v, v \in V_{\mathrm{pri}}$ are all
 equal to the kernel $K$ of the action of $G$ on $V$.
 Since $K$ is also the kernel of the action on $V \times V^*$, we have
 $K < G_{(v,f)}$ for all $v \in V, f \in V^*$.  On the other hand, if
 $v \in V'$, then $K > G_{(v,f)}$ for all
 $f \in V^*$.  So $K=G_{(v,f)}$ for all $v \in V', f \in V$.

We claim that all orbits in $V' \times V^*$ are closed. More
generally, let $W$ be any representation of $G$ and $w \in W$. Then
$G \cdot (v, w)$ has dimension $G$, as $G \cdot v$ does.  If
$G \cdot (v,w)$ is not closed, then its boundary contains an orbit of
the form $G \cdot (v,w')$, as $G \cdot v$ itself is closed. Being on
the boundary, the orbit has dimension strictly less than $\dim
G$. This contradicts the previous statement. The claim follows.

Thus the kernel $K$ is also the principal isotropy group for all
points $(v,f) \in (V \times V^*)'$, which includes $V' \times V^*$,
and similarly also $V \times (V^*)'$.  This proves the final
assertion, and hence the lemma. \qedhere

\end{proof} 	

Recall that if $D_1$ and $D_2$ are Weil divisors on a normal variety $X$ then $\mc{O}(D_i)$ denotes the corresponding reflexive rank one subsheaf of $\ms{K}(X)$ and $ \mc{O}(D_1 + D_2) = (\mc{O}(D_1) \o \mc{O}(D_2))^{\vee \vee}$. 
Set $\mc{O}(D)^{(n)} := \mc{O}(nD)$.

\begin{lem}\label{lem:Nagataequivstatement}
	Let $X$ be a normal irreducible variety and $x \in X$. The following are equivalent:
	\begin{enumerate}
		\item[(i)] The local ring $\mc{O}_{X,x}$ has torsion class group $\mathrm{Cl}(\mc{O}_{X,x})$.
		\item[(ii)] For every line bundle $M_0$ on $X_{\sm}$, there exists an open subset $U$ containing both $x$ and $X_{\sm}$, and $n \geq 1$, such that $M_0^{\otimes n}$ extends to a
		line bundle $M$ on $U$.
	\end{enumerate}
\end{lem}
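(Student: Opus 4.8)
The plan is to rephrase both (i) and (ii) in terms of the restriction map on Weil divisor class groups $\mathrm{Cl}(X)\to\mathrm{Cl}(\mc{O}_{X,x})$, after which the equivalence becomes almost tautological. To set up the dictionary, recall that since $X$ is normal its singular locus has codimension $\geq 2$, so $X_{\sm}\subseteq X$ is open with complement of codimension $\geq 2$; hence restriction gives isomorphisms $\mathrm{Cl}(X)\iso\mathrm{Cl}(X_{\sm})=\mathrm{Pic}(X_{\sm})$, the last equality because $X_{\sm}$ is smooth. In particular a line bundle $M_0$ on $X_{\sm}$ is, up to isomorphism, the restriction of the divisorial sheaf $\mc{O}_X(D)$ for some Weil divisor $D$ on $X$, and $M_0^{\o n}$ corresponds in the same way to $\mc{O}_X(nD)=\mc{O}_X(D)^{(n)}$.

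Next I would unwind what it means for $M_0^{\o n}$ to extend to a line bundle on an open $U$ with $X_{\sm}\cup\{x\}\subseteq U$. Such a $U$ has $X\setminus U\subseteq X\setminus X_{\sm}$ of codimension $\geq 2$, so a line bundle $M$ on $U$ with $M|_{X_{\sm}}\cong M_0^{\o n}$ is reflexive, hence equal to the pushforward of its restriction to $X_{\sm}$, which forces $M=\mc{O}_X(nD)|_U$. Thus the extension is unique when it exists, and it exists exactly when $\mc{O}_X(nD)$ is locally free (equivalently, Cartier) at every point of $U$; since local freeness is open and $\mc{O}_X(nD)$ is automatically locally free along $X_{\sm}$, the existence of some admissible $U$ containing $x$ is equivalent to the single condition that $\mc{O}_X(nD)$ be locally free at $x$ (take $U=X_{\sm}\cup U'$ with $U'\ni x$ a locus of local freeness). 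So (ii) becomes: for every $[D]\in\mathrm{Cl}(X)$ there is $n\geq 1$ with $\mc{O}_X(nD)$ locally free at $x$.

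Finally I would bring in $A:=\mc{O}_{X,x}$, a normal Noetherian local domain, and the classical correspondence between $\mathrm{Cl}(A)$ and isomorphism classes of rank-one reflexive $A$-modules, under which free modules give the zero class. The stalk $\mc{O}_X(nD)_x$ is the rank-one reflexive module representing the image of $n[D]$ under the restriction map $\mathrm{Cl}(X)\to\mathrm{Cl}(A)$, so $\mc{O}_X(nD)$ is locally free at $x$ if and only if $n[D]\mapsto 0$ in $\mathrm{Cl}(A)$. Moreover $\mathrm{Cl}(X)\to\mathrm{Cl}(A)$ is surjective --- it may be identified with the natural map to $\varinjlim_{x\in U}\mathrm{Cl}(U)$, and each $\mathrm{Cl}(X)\to\mathrm{Cl}(U)$ is surjective since prime divisors on $U$ extend by closure. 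Combining: (ii) says every class in the image of $\mathrm{Cl}(X)\to\mathrm{Cl}(A)$ is torsion, and by surjectivity this is precisely the statement that $\mathrm{Cl}(A)=\mathrm{Cl}(\mc{O}_{X,x})$ is a torsion group, i.e.\ (i).

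I do not anticipate a genuine obstacle, as every step is standard divisor theory and commutative algebra; the point deserving the most care is the identification, in the second paragraph, of an abstract line-bundle extension with the divisorial sheaf $\mc{O}_X(nD)$. This is exactly where normality of $X$ is indispensable, through the bound $\codim_X(X\setminus X_{\sm})\geq 2$, and it is what lets one pass freely between ``$M_0^{\o n}$ extends to a line bundle near $x$'' and ``$\mc{O}_X(nD)$ is invertible at $x$''. I would also make explicit the surjectivity of $\mathrm{Cl}(X)\to\mathrm{Cl}(\mc{O}_{X,x})$ and its description as a colimit over neighbourhoods of $x$, since that is what allows condition (ii), phrased only via line bundles on $X_{\sm}$, to see all of $\mathrm{Cl}(\mc{O}_{X,x})$.
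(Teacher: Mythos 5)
Your proposal is correct and follows essentially the same route as the paper: both arguments rest on the dictionary between line bundles on $X_{\sm}$, Weil divisor classes on $X$, and reflexive rank-one stalks at $x$, on the fact that a divisorial sheaf is invertible at $x$ exactly when its class dies in $\mathrm{Cl}(\mc{O}_{X,x})$, and on the surjectivity of $\mathrm{Cl}(X)\to\mathrm{Cl}(\mc{O}_{X,x})$ (which the paper cites as Nagata's theorem and you rederive via the colimit description). Your write-up is somewhat more systematic in reformulating both conditions at once, but there is no substantive difference in method or content.
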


\begin{proof}
		Recall that $\mc{O}_{X,x}$ is a unique factorization domain if and
	only if every height one prime is principal. Geometrically, this means
	that for every hypersurface $C$ of $X$, the sheaf of ideals $\mc{I}_C$
	is free at $x$. Since $X$ is normal, $X_{\sm}$ has complement of codimension at least $2$ in $X$. 
	
	(i) implies (ii). We denote by the same symbol $M_0$ its push-forward to $X$. Thus, $M_0$ is a reflexive rank one sheaf. There exists some $n \geq 1$ such that $M_0^{(n)}$ has trivial image in $\mathrm{Cl}(\mc{O}_{X,x})$. Thus, $M := M_0^{(n)}$ is locally free in a neighborhood of $x$ and $M |_{X_{\sm}} = M_0^{\otimes n}$. 
	
	(ii) implies (i). Let $E \in \mathrm{Cl}(\mc{O}_{X,x})$. By Nagata's Theorem, we can choose a Weil divisor $D$ on $X$ whose image in $\mathrm{Cl}(\mc{O}_{X,x})$ equals $E$. Let $\mc{O}(D)$ be the corresponding reflexive rank one sheaf. We wish to show that $\mc{O}(D)^{(n)}$ is free in a neighborhood of $x$ for some $n \geq 1$. Let $M$ be the extension of $\mc{O}(D)|_{X_{\sm}}^{\o n}$ to $U$. The line bundle $M$ corresponds to a Cartier divisor $C$ on $U$; $M = \mc{O}_{U}(C)$. Then,
	$$
	(\mc{O}(D)|_{X_{\sm}})^{\o n} = \mc{O}_{X_{\sm}}(C \cap X_{\sm}), 
	$$ 
	and the divisors $nD |_{X_{\sm}}$ and $C |_{X_{\sm}}$ are linearly equivalent. Since $X$ is normal, we have $nD \sim C$, implying that $nD$ is Cartier. Thus, $nE = 0$. 
\end{proof}

The following is a variant of \cite[Theorem 6.7]{BellSchedQuiver}, itself based on a result of Drezet \cite{Drezet}. 

\begin{thm}\label{thm:Drezettorsion}
	Let $N$ be an affine locally factorial normal irreducible $G$-variety with good quotient $\xi: N \rightarrow X := N \git \, G$. Assume that:
	\begin{enumerate}
		\item[(a)] $N$ has FPIG,
		\item[(b)] the complement to $N_{\mathrm{pri}}$ in $N$ has codimension at least two; and 
		\item[(c)] the complement to $\xi^{-1}(X_{\sm})$ in $N$ has codimension at least two. 
	\end{enumerate}
	Let $x \in X$ and $y \in \xi^{-1}(x)$ such that $G \cdot y$ is closed in $N$. The following are equivalent:
	\begin{enumerate}
		\item[(i)] The local ring $\mc{O}_{X,x}$ has torsion class group $\mathrm{Cl}(\mc{O}_{X,x})$.
		\item[(ii)] For every line bundle $M_0$ on $X_{\sm}$, there exists an open subset $U$ containing both $x$ and $X_{\sm}$, and $n \geq 1$, such that $M_0^{\otimes n}$ extends to a
		line bundle $M$ on $U$.
		\item[(iii)] For every $G$-equivariant line bundle $L$ on $N$, the action of the stabilizer $G_y$ on every fiber $L_y$ factors through a finite group.
	\end{enumerate}
\end{thm}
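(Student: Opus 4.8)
The plan is to prove the cycle of implications (i) $\Leftrightarrow$ (ii) for free via Lemma \ref{lem:Nagataequivstatement}, so that the real content is the equivalence (ii) $\Leftrightarrow$ (iii). The bridge between the two is descent along the good quotient $\xi$: $G$-equivariant line bundles on (invariant open subsets of) $N$ push down to line bundles downstairs, and conversely line bundles on $X$ pull back to $G$-equivariant line bundles on $N$. Since $N$ is locally factorial, every line bundle on a big open subset of $N$ extends across a codimension-two complement; combined with hypotheses (b) and (c), this lets us move freely between line bundles on $X_{\sm}$, reflexive sheaves on $X$, and $G$-equivariant line bundles on $N$ and $N_{\mathrm{pri}}$.

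First I would set up the descent dictionary. Given a line bundle $M_0$ on $X_{\sm}$, pull it back to the $G$-equivariant line bundle $\xi^* M_0$ on $\xi^{-1}(X_{\sm})$; by (c) and local factoriality of $N$, this extends uniquely to a $G$-equivariant line bundle $L$ on all of $N$ (a reflexive rank one $G$-sheaf on a locally factorial variety is a line bundle). Conversely, given a $G$-equivariant line bundle $L$ on $N$, Kempf descent (descent along a good quotient) says $L$ descends to a line bundle on an invariant open $V \subseteq N$ precisely when, for every point $y \in V$ with closed orbit, the stabilizer $G_y$ acts trivially on the fiber $L_y$. The FPIG hypothesis (a) plus (b) is what makes the descended object, restricted to the principal locus, automatically a genuine line bundle there; the obstruction to descending on a larger open set containing $y$ is exactly the $G_y$-action on $L_y$.

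Now for (iii) $\Rightarrow$ (ii): given $M_0$ on $X_{\sm}$, form $L$ on $N$ as above. By (iii), $G_y$ acts on $L_y$ through a finite quotient, so $G_y$ acts trivially on $L_y^{\otimes n}$ for suitable $n \geq 1$. Since $G \cdot y$ is closed, semicontinuity of stabilizer actions (Luna slice theorem: nearby closed orbits have conjugate, smaller stabilizers) shows $G_{y'}$ acts trivially on $(L^{\otimes n})_{y'}$ for all $y'$ in a $G$-invariant neighborhood $\xi^{-1}(U)$ of $G\cdot y$; shrinking, we may assume $U \supseteq X_{\sm}$ as well using (b), (c). Then $L^{\otimes n}$ descends to a line bundle $M$ on $U$ extending $M_0^{\otimes n}$. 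For the converse (ii) $\Rightarrow$ (iii): let $L$ be any $G$-equivariant line bundle on $N$; by (c) and local factoriality, $L|_{\xi^{-1}(X_{\sm})}$ descends to a line bundle $M_0$ on $X_{\sm}$ (here the $G$-stabilizers of closed-orbit points over $X_{\sm}$ must already act trivially on $L$, since over the smooth locus the descent is unobstructed — this uses that $\xi^{-1}(X_{\sm}) \supseteq N_{\mathrm{pri}}$ up to codimension two and FPIG). Applying (ii) to $M_0$, some power $M_0^{\otimes n}$ extends to a line bundle $M$ on $U \ni x$; then $\xi^* M$ and $L^{\otimes n}$ are two $G$-equivariant line bundles on $\xi^{-1}(U)$ agreeing on the big open $\xi^{-1}(X_{\sm})$, hence equal by local factoriality of $N$. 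Since $\xi^* M$ has trivial $G_y$-action on its fiber at $y$, so does $L^{\otimes n}$, i.e. $G_y$ acts on $L_y$ through a group of order dividing $n$.

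The main obstacle I anticipate is making the descent statements precise and correctly tracking where each hypothesis (a)--(c) is used — in particular, verifying that descended line bundles are honest line bundles (not just reflexive sheaves) on the relevant opens, and that the "spreading out" of the triviality of the stabilizer action from the single closed orbit $G \cdot y$ to a neighborhood is legitimate. This is exactly the point where Luna's slice theorem and the argument of Drezet \cite{Drezet} (as adapted in \cite[Theorem 6.7]{BellSchedQuiver}) enter, and I would lean on that adaptation rather than reprove it from scratch; the role of the hypotheses here is precisely to guarantee that the "big open" subsets on which descent is clean have complements of codimension at least two, so that reflexive extension across them is automatic.
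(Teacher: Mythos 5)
Your overall architecture matches the paper's: (i)$\Leftrightarrow$(ii) via Lemma \ref{lem:Nagataequivstatement}, and (ii)$\Leftrightarrow$(iii) via Drezet-style descent along $\xi$, using (b) and (c) to reduce everything to big open sets and reflexive extension, and Luna-type spreading out (the paper's citation of \cite[Lemma 6.8]{BellSchedQuiver}) to propagate triviality of the stabilizer action from the single closed orbit $G\cdot y$ to a neighbourhood. Your (iii)$\Rightarrow$(ii) direction is essentially the paper's argument.

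There is, however, a genuine gap in your (ii)$\Rightarrow$(iii) direction. You assert that for an arbitrary $G$-equivariant line bundle $L$ on $N$, the restriction $L|_{\xi^{-1}(X_{\sm})}$ descends to a line bundle $M_0$ on $X_{\sm}$ because ``over the smooth locus the descent is unobstructed,'' i.e.\ the stabilizers of closed-orbit points there already act trivially on the fibers of $L$. This is circular and false in general: the descent criterion \emph{is} triviality of the stabilizer action on fibers, and the principal isotropy groups, while finite by FPIG, can act on $L_y$ through a nontrivial character (e.g.\ take the trivial bundle equipped with a nontrivial equivariant structure coming from a character of $G$ that restricts nontrivially to the principal isotropy group). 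The paper's fix is exactly where FPIG earns its keep: since all principal isotropy groups are conjugate, they have a common order $m$, so they all act trivially on the fibers of $L^{\otimes m}$ over $N_s := \xi^{-1}(X_{\mathrm{pri}}\cap X_{\sm})$; it is $L^{\otimes m}$, not $L$, that descends to an $M_0$ on $X_s$ (then extended to $X_{\sm}$ across a codimension-two complement). Running your argument with $L^{\otimes m}$ in place of $L$, and comparing $\xi^* M$ with $L^{\otimes mn}$ on $N_s$ (normality gives equality on all of $\xi^{-1}(U)$), yields that $G_y$ acts on $L_y$ through a group of order dividing $mn$ --- still finite, so the implication survives, but the extra factor $m$ and its source in FPIG are a necessary ingredient that your write-up omits.
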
 

\begin{proof}
	Since $N$ is normal, so too is $X$. Therefore the fact that (i) is equivalent to (ii) follows from Lemma \ref{lem:Nagataequivstatement}. 
	
	The set $N_{\mathrm{pri}}$ is the pre-image under $\xi$ of $X_{\mathrm{pri}}$.  The fibers of $\xi: N \to X$ have dimension $G$ over the principal locus, hence have dimension $\geq \dim G$ everywhere. 
Therefore, the fact that the complement to $N_{\mathrm{pri}}$ in $N$ has codimension at least two implies that the complement to $X_{\mathrm{pri}}$ in $X$ has codimension at least two as well. Let $X_s = X_{\mathrm{pri}} \cap X_{\sm}$, an open set with complement of codimension at least two. Let $N_s = \xi^{-1}(X_s)$. Our assumptions imply that the complement to $N_s$ in $N$ has codimension at least two as well.   
	
	(ii) implies (iii). Suppose that $L$ is a $G$-equivariant line bundle on $N$. Since $N$ has FPIG, all stabilizers $G_y$ for $y \in N_{\mathrm{pri}}$ are conjugate. In particular, their orders are the same. Thus, there exists some $m$ for which the stabilizers all act trivially on the fibers of $L^{\o m} |_{N_s}$. By descent \cite[Theorem 1.1]{Drezet}, the line bundle $(L^{\o m}) |_{N_{s}}$ descends to a line bundle $M_0$ on $X_{s}$. This line bundle extends to $X_{\sm}$ since $X_{\sm} \setminus X_s$ has codimension at least two, and $X_{sm}$ is smooth (hence locally factorial). By (ii), there is an extension $M$ of $M_0^{\o n}$ to $U$. Then the $G$-equivariant line bundle $\xi^* M$ agrees with $L^{\o nm}$ on $N_s$. By normality, this implies that $\xi^* M = L^{\o mn}$ on $\xi^{-1}(U)$. In	particular, since $y \in \xi^{-1}(U)$, the stabilizer of $y$ acts trivially on $L_y^{\o mn}$.
	
	(iii) implies (ii). Let $M_0$ be a line bundle on $X_s$. By \cite[Lemma 6.6]{BellSchedQuiver}, the line bundle $\xi^* M_0$ extends to a $G$-equivariant line bundle $L$ on $N$. Let $y \in N$. Then, $G_y$ acts trivially on $L^{\o n}_y$ for some $n \geq 1$ (we can take $n$ to be the size of the finite quotient through which $G_y$ acts). By \cite[Lemma 6.8]{BellSchedQuiver} there is an affine open neighborhood $U$ of $x$ such that $G_{y'}$ acts trivially on $L_{y'}^{\o n}$ for all $y' \in \xi^{-1}(U)$ such that $G \cdot y'$ is closed in $N$. We may assume without loss of generality that $X_s \subset U$. Then, by descent \cite[Theorem 1.1]{Drezet}, there exists a line bundle $M$ on $U$ such that $\xi^* M \simeq L^{\o n}$. In particular, $M$ extends $M_0^{\o n}$.
\end{proof}

\begin{cor} \label{cor:drezettorsion}
Assume that $\mathrm{(a)}$--$\mathrm{(c)}$ of Theorem \ref{thm:Drezettorsion} hold, and that $N$ admits a $\Cs$-action, commuting with the action of $G$, contracting all points to a unique fixed point. If $n:=|G_{\mathrm{ab}}|$ is finite then for each Weil divisor $D$ on $N \git \, G$, $n D$ is Cartier. 
\end{cor}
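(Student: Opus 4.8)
The plan is to apply Theorem~\ref{thm:Drezettorsion} at the fixed point of the $\Cs$-action and then propagate the conclusion over all of $X := N \git G$ using that action. Write $\xi : N \to X$ for the quotient; note $X$ is normal since $N$ is. Let $o \in N$ be the unique $\Cs$-fixed point. Because the $\Cs$-action commutes with $G$, for each $g \in G$ the point $g \cdot o$ is again $\Cs$-fixed, so $g \cdot o = o$; hence $o$ is $G$-fixed, $G \cdot o = \{o\}$ is closed, and $G_o = G$. The $\Cs$-action descends to $X$ and contracts it to $\bar o := \xi(o)$, and every nonempty closed $\Cs$-stable subset of $X$ contains $\bar o$ (it contains $\overline{\Cs \cdot p} \ni \lim_{t \to 0} t \cdot p = \bar o$ for any of its points $p$).

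Now apply Theorem~\ref{thm:Drezettorsion} with $x = \bar o$ and $y = o$. The crucial point is that condition (iii) holds \emph{with the uniform exponent $n = |G_{\mathrm{ab}}|$}: for any $G$-equivariant line bundle $L$ on $N$, the stabilizer $G_o = G$ acts on the line $L_o$ through an algebraic character $\chi : G \to \Cs$, which factors through the finite group $G_{\mathrm{ab}}$ of order $n$, so $\chi^n = 1$. Tracing the implications (iii) $\Rightarrow$ (ii) $\Rightarrow$ (i) in the proofs of Theorem~\ref{thm:Drezettorsion} and Lemma~\ref{lem:Nagataequivstatement}, the exponent produced there is exactly the order of the finite quotient through which $G_y$ acts on a suitable fiber, and for $y = o$ this order divides $n$ regardless of the input line bundle. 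Hence for \emph{every} Weil divisor $D$ on $X$ the reflexive sheaf $\mc{O}_X(nD)$ is free on some open set $U_D$ with $\bar o \in U_D$ and $X_{\sm} \subseteq U_D$.

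It remains to pass from ``free near $\bar o$ and on $X_{\sm}$'' to ``free everywhere'', and here the contracting action does the work. Let $Z \subseteq X$ be the non-locally-free locus of $\mc{O}_X(nD)$. Comparing the pullbacks of $D$ along the action map $\Cs \times X \to X$ and along the second projection — they coincide on $\{1\} \times X$, and $\mathrm{Cl}(\Cs \times X) \cong \mathrm{Cl}(X)$ because $\{0\} \times X$ is principal in $\mathbb{A}^1 \times X$ and $\mathrm{Cl}$ is $\mathbb{A}^1$-invariant — shows $\Cs$ acts trivially on $\mathrm{Cl}(X)$, so $t^* \mc{O}_X(nD) \cong \mc{O}_X(nD)$ and $Z$ is $\Cs$-stable. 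On the other hand $\mc{O}_X(nD)|_{U_D}$ is reflexive of rank one and agrees, off a set of codimension $\geq 2$, with the line bundle on $U_D$ given by (ii); by normality they are equal, so $Z \cap U_D = \emptyset$, and in particular $\bar o \notin Z$. Since a nonempty closed $\Cs$-stable subset of $X$ must contain $\bar o$, we conclude $Z = \emptyset$, i.e. $nD$ is Cartier.

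The main obstacle is the middle step: one must inspect the proof of Theorem~\ref{thm:Drezettorsion} to confirm that the \emph{a priori} divisor-dependent integer in statement (ii) may be taken to be the single number $n = |G_{\mathrm{ab}}|$, using that every algebraic character of $G$ is $n$-torsion and that the contracting action forces $G_o = G$. The final spreading-out is routine; alternatively one can invoke that $\mathrm{Pic}(X) = 0$ and $\mathrm{Cl}(X) \iso \mathrm{Cl}(\mc{O}_{X,\bar o})$ for a cone and quote Theorem~\ref{thm:Drezettorsion}(i) directly, but the argument via the non-free locus above is self-contained.
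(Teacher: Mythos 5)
Your proof is correct and follows essentially the same route as the paper: apply Theorem \ref{thm:Drezettorsion} at the cone point $o$, where $G_o=G$ acts on any fiber $L_o$ through a character killed by $n=|G_{\mathrm{ab}}|$, and then propagate from $\xi(o)$ to all of $X$ via the contracting $\Cs$-action. The only difference is presentational: you make explicit the uniform exponent extracted from the proof of (iii)$\Rightarrow$(ii) and the spreading-out via the $\Cs$-stable non-locally-free locus, where the paper instead cites \cite{OpenFactorial} and leaves the final globalization as a one-line remark.
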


\begin{proof} 
	Let $o$ be the unique fixed point of the $\Cs$-action on $N$. Then $G_o = G$ and $\{ o \}$ is a closed orbit in $N$. Let $L$ be a $G$-equivariant line bundle on $N$, as in the proof of (iii)$\Rightarrow$(ii) in Theorem \ref{thm:Drezettorsion}. Our assumptions imply that $G=G_o$ acts trivially on the fiber $L_o^{\otimes n}$. It follows that the class group of the local ring $\mc{O}_{X,\xi(o)}$ is $n$-torsion. By \cite{OpenFactorial}, this implies that for each Weil divisor $D$ in a neighborhood of $\xi(o)$ in $X$, $n D$ is Cartier. Using the contracting $\C^*$ action on $X$, this must hold globally. 
\end{proof}
\begin{remark} \label{rem:drezet-qfact}
In Theorem \ref{thm:Drezettorsion} and Corollary \ref{cor:drezettorsion}, it actually suffices to allow $N$ to be $\Q$-factorial: it need not be locally factorial. In Corollary \ref{cor:drezettorsion}, the revised statement should be that, if $m \text{Weil}(N) \subseteq \text{Cartier}(N)$, then $m|G_{\text{ab}}| \text{Weil}(N \git \, G) \subseteq \text{Cartier}(N \git \, G)$. This only affects the argument of (iii)$\Rightarrow$(ii) of Theorem \ref{thm:Drezettorsion} by replacing $\xi^* M_0$ there by $(\xi^* M_0)^{\otimes m}$.
\end{remark}

In particular, if $G$ is perfect in Corollary \ref{cor:drezettorsion}, then $N \git \, G$ is locally factorial. This applies for instance when $G$ is connected semi-simple. 

We wish to apply the above results to the particular case where $N = \mu^{-1}(0) \subset T^* V$ for some $G$-representation $V$ and $X = N \git G$. We require a technical lemma:

\begin{lem}\label{lem:pricialcodim4}
	If $V$ is $2$-large then $N \smallsetminus N_{\mathrm{pri}}$ has codimension at least $2$ in $N$. 
\end{lem}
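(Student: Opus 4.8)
The plan is to make the variety $N = \mu^{-1}(0) \subseteq V \times V^*$ explicit and then show that its non-principal locus already projects, under the first factor, into a locus of $V$ that is small because $V$ is $2$-principal and $2$-modular. For $(v,\lambda) \in V \times V^*$ one has $\mu(v,\lambda) = 0$ exactly when $\lambda$ annihilates the tangent space $\mf{g} \cdot v$ to the orbit $G \cdot v$; hence the first projection $p \colon N \to V$ is surjective with linear fibre $p^{-1}(v) = \{v\} \times (\mf{g} \cdot v)^{\perp}$, of dimension $\dim V - \dim G + \dim G_v$. Over the open dense stratum $V_{(0)} = \{v : \dim G_v = 0\}$ this exhibits $p^{-1}(V_{(0)})$ as the total space of a vector bundle of rank $\dim V - \dim G$, in particular an irreducible locally closed subset of dimension $2\dim V - \dim G$ that is open in $N$. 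Since $N$ is cut out in $V \times V^*$ by the $\dim G$ coordinate functions of $\mu$, every irreducible component of $N$ has dimension at least $2\dim V - \dim G$; once the last paragraph shows that $p^{-1}(V \setminus V_{(0)})$ has strictly smaller dimension, it will follow that $N$ is irreducible of dimension $2\dim V - \dim G$. (This irreducibility and dimension may also simply be quoted from \cite{SchwarzHamred}.)

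The key step is the inclusion $N \smallsetminus N_{\mathrm{pri}} \subseteq p^{-1}(V \setminus V')$. Let $K$ be the kernel of the $G$-action on $V$. Since $N$ contains $V \times \{0\}$, $K$ is also the kernel of the $G$-action on $N$, so $K \leq G_y$ for every $y \in N$. By Lemma \ref{lem:VpriVstarVVstarpri}, $V \times V^*$ is $4$-large, so by \cite[Corollary 7.7]{Schwarz} the principal isotropy group of $V \times V^*$ equals this kernel $K$; in particular $K$ is finite, and every point of $(V \times V^*)'$ has stabiliser exactly $K$ and orbit closed in $V \times V^*$. Also by Lemma \ref{lem:VpriVstarVVstarpri} we have $V' \times V^* \subseteq (V \times V^*)'$. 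Hence, if $v \in V'$, then for every $\lambda \in V^*$ the orbit $G \cdot (v,\lambda)$ is closed in $V \times V^*$ — therefore in $N$ — and its stabiliser is the finite group $K$; as $K$ is contained in every stabiliser of a point of $N$, it realises the minimal stabiliser dimension ($0$) and the minimal number of connected components, so $(v,\lambda)$ is a principal point of $N$ (this also records that $N$ has FPIG). Passing to complements gives the inclusion. I expect this to be the main obstacle, since principality for the $G$-action on $N$ is not a priori comparable with the analogous notions for $V$ and for $V \times V^*$; the argument works only because Lemma \ref{lem:VpriVstarVVstarpri} together with \cite[Corollary 7.7]{Schwarz} collapses all the relevant stabilisers to the single finite group $K$.

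Finally I would bound $\dim p^{-1}(V \setminus V')$ by stratifying $V \setminus V'$ by the value $r = \dim G_v$. Using the fibre dimension of $p$ from the first paragraph, the part of $N$ lying over $V_{(r)} \cap (V \setminus V')$ has dimension at most $\dim\big(V_{(r)} \cap (V \setminus V')\big) + (\dim V - \dim G + r)$. For $r \geq 1$, $2$-modularity of $V$ gives $\dim V_{(r)} \leq \dim V - r - 2$; for $r = 0$, $2$-principality gives $\dim\big(V_{(0)} \cap (V \setminus V')\big) \leq \dim V - 2$. In either case the bound is $2\dim V - \dim G - 2$, and taking the union over the finitely many values of $r$ gives $\dim\big(N \smallsetminus N_{\mathrm{pri}}\big) \leq 2\dim V - \dim G - 2 = \dim N - 2$. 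As $N$ is irreducible this says precisely that $N \smallsetminus N_{\mathrm{pri}}$ has codimension at least $2$ in $N$.
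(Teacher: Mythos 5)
Your proof is correct, but the core of your argument is genuinely different from the paper's. Both proofs share the first reduction: via Lemma \ref{lem:VpriVstarVVstarpri} (and \cite[Corollary 7.7]{Schwarz}, which identifies the principal isotropy group with the finite kernel $K$), one shows $p^{-1}(V') \subseteq N_{\mathrm{pri}}$, so that it suffices to bound $p^{-1}(V \setminus V')$ --- your version of this step is somewhat more explicit about why closedness of $G\cdot(v,\lambda)$ in $V\times V^*$ and minimality of $K$ make $(v,\lambda)$ principal \emph{for the $G$-variety $N$}, which the paper essentially asserts. Where you diverge is the codimension estimate itself. The paper argues algebraically: for each irreducible component $Z$ of $V\setminus V_{\mathrm{pri}}$ it produces two invariants $f_1,f_2 \in \C[V]^G$ vanishing on $Z$ with no common factor, and invokes \cite[Lemma 9.7]{Schwarz} to see that $f_1,f_2$ together with the components of $\mu$ form a regular sequence, whence $f_1,f_2$ cut out codimension two in $N$; this works without knowing $\dim N$ or that $N$ is irreducible. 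You instead argue geometrically, using the explicit fibre $p^{-1}(v)=\{v\}\times(\mf{g}\cdot v)^{\perp}$ of dimension $\dim V-\dim G+\dim G_v$ and feeding the $2$-principal and $2$-modular inequalities stratum by stratum into a dimension count; this is more elementary (no appeal to \cite[Lemma 9.7]{Schwarz}) and as a by-product re-derives the irreducibility of $N$ and $\dim N = 2\dim V-\dim G$, but it does need that equidimensionality, which you correctly supply via Krull's height theorem plus the same stratified bound. Both arguments are sound; yours makes the role of the numerical conditions in Definition \ref{d:2large} more transparent, while the paper's regular-sequence argument is shorter given Schwarz's machinery and is insensitive to the global structure of $N$.
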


\begin{proof}
	
	Since $V \times \{ 0 \} \subset N$, Lemma \ref{lem:VpriVstarVVstarpri} implies that $N$ contains principal points of $V \times V^*$. Thus, $N$ satisfies FPIG and 
$N_{\mathrm{pri}} = (V \times V^*)_{\mathrm{pri}} \cap N$. Moreover, 
Lemma \ref{lem:VpriVstarVVstarpri} implies that it suffices to show
that the complement to $(V_{\mathrm{pri}} \times V^*) \cap N$ in $N$
has codimension at least two. Explicitly, for each irreducible 
component $Z \subseteq V \setminus V_{\pri}$, 
we need to find a pair of
functions $f_1,f_2 \in \C[V]$, both vanishing on $Z$,
 which form
a regular sequence on $N$.

To find the functions $f_1, f_2$, note that  
$V_{\pri}$ is the preimage of an open dense subset of
$V \git \, G$, with complement of codimension at least two. Therefore there exist
$G$-invariant $f_1, f_2 \in \C[V \git \, G] = \C[V]^G$, vanishing on $Z \subseteq (V \setminus V_{\pri})$,
which are not scalar multiples of each other. Since $Z$ is irreducible,
we can assume that $f_1$ is an irreducible element of $\C[V]^G$. After replacing $f_2$ by $f_2/\gcd(f_1,f_2)$, we can also assume they share no common factors, i.e., they form a regular sequence on $V$. Then,  
it follows from \cite[Lemma 9.7]{Schwarz} that
$f_1, f_2, f_{A_1}, \ldots, f_{A_\ell}$ form a regular sequence, where
$f_{A_1}, \ldots, f_{A_\ell}$ are the defining equations for
$N$. Thus $f_1$ and $f_2$ also define a regular sequence on $N$. \qedhere

\end{proof}

For the remainder of this section, we assume that $V$ is a $3$-large representation of $G$. Let $N := \mu^{-1}(0)$ and $X := N \git \, G$. By \cite[Proposition 3.2]{SchwarzHamred}, this implies that $N$ is reduced, irreducible, and normal.
Since $V$ has FPIG by assumption and $V \times \{ 0 \} \subset N$, $N$ also has FPIG.

\begin{cor}\label{cor:QfactorialHamred}
	The Hamiltonian reduction $X$ is $\Q$-factorial if and only if the abelianization $G_{\mathrm{ab}}$ of $G$ is finite. If $G$ is perfect then $X$ is locally factorial.
\end{cor}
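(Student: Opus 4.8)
The plan is to apply Corollary \ref{cor:drezettorsion} and Theorem \ref{thm:Drezettorsion} to the variety $N = \mu^{-1}(0) \subset T^*V = V \times V^*$, using the hypotheses we have already assembled. First I would verify the three hypotheses (a)--(c) of Theorem \ref{thm:Drezettorsion} for this $N$. Hypothesis (a) is the FPIG condition, which holds because $V$ has FPIG and $V \times \{0\} \subset N$, as noted in the paragraph preceding the corollary. Hypothesis (b), that the complement to $N_{\mathrm{pri}}$ in $N$ has codimension at least two, is exactly Lemma \ref{lem:pricialcodim4} (applicable since $3$-large implies $2$-large). For hypothesis (c), that the complement to $\xi^{-1}(X_{\sm})$ has codimension at least two, I would invoke the normality of $X$ (from \cite[Proposition 3.2]{SchwarzHamred}), which forces $X \setminus X_{\sm}$ to have codimension at least two in $X$; since the fibers of $\xi$ over the principal locus have dimension $\dim G$ and the generic fiber dimension bounds all fiber dimensions from below, the preimage $\xi^{-1}(X \setminus X_{\sm})$ has codimension at least two in $N$ — the same fiber-dimension bookkeeping used in the proof of Theorem \ref{thm:Drezettorsion}. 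One also needs $N$ to be locally factorial (or, via Remark \ref{rem:drezet-qfact}, $\Q$-factorial): here $N = \mu^{-1}(0)$ is a complete intersection in the smooth variety $V \times V^*$ cut out by the components of $\mu$, which are a regular sequence on a $2$-large representation (again \cite[Lemma 9.7]{Schwarz}, as used in Lemma \ref{lem:pricialcodim4}), so $N$ is Gorenstein with rational — in fact symplectic — singularities by \cite[Proposition 3.2]{SchwarzHamred}; combined with the codimension-$\ge 2$ control on the singular locus one deduces local factoriality of $N$.

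Next, for the ``if'' direction I would supply the contracting $\Cs$-action required by Corollary \ref{cor:drezettorsion}: the scaling action on $V \times V^*$ (weight $1$ on each of $V$ and $V^*$, say) commutes with $G$, preserves $\mu^{-1}(0)$ since $\mu$ is quadratic and hence homogeneous, and contracts everything to the origin, which is the unique fixed point. Corollary \ref{cor:drezettorsion} then yields that, when $n := |G_{\mathrm{ab}}|$ is finite, $nD$ is Cartier for every Weil divisor $D$ on $X$, i.e.\ $X$ is $\Q$-factorial; and when $G$ is perfect ($n = 1$) this says every Weil divisor is Cartier, i.e.\ $X$ is locally factorial. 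This handles (c) of Theorem \ref{thm:mainshort} and one direction of (b).

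For the converse — if $X$ is $\Q$-factorial then $G_{\mathrm{ab}}$ is finite — I would argue contrapositively: suppose $G_{\mathrm{ab}}$ is infinite, so $G$ admits a surjection onto a torus $\Cs$ (or a positive-dimensional diagonalizable quotient). Using the equivalence (i)$\Leftrightarrow$(iii) of Theorem \ref{thm:Drezettorsion} at the point $y = o$ the origin (where $G_o = G$), it suffices to exhibit a $G$-equivariant line bundle $L$ on $N$ whose fiber $L_o$ carries a $G$-action not factoring through a finite group; then $\mathrm{Cl}(\mc{O}_{X,\xi(o)})$ fails to be torsion, so $X$ is not $\Q$-factorial. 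The natural candidate is the trivial line bundle $N \times \C$ on $N$ with $G$ acting on the $\C$-factor through a nontrivial character $\chi: G \to \Cs$ arising from the infinite abelianization; its fiber at $o$ is precisely the $1$-dimensional representation $\chi$, on which $G$ acts through the infinite group $\operatorname{im}\chi$. I expect this last step — confirming that such a character genuinely produces a non-torsion divisor class, as opposed to being killed by the descent/extension mechanism — to be the main obstacle, and it is essentially the content of making Theorem \ref{thm:Drezettorsion}(iii) sharp; one may need to check that the corresponding reflexive sheaf on $X$ really is non-torsion, e.g.\ by relating $\mathrm{Cl}(\mc{O}_{X,\xi(o)})$ to the character group $X^*(G)$ via Drezet-style descent and observing that the contribution of $X^*(G)/X^*(G)_{\mathrm{tors}}$ survives because the generic stabilizer $K$ acts trivially on $V \times V^*$ and hence imposes no constraint on $\chi|_K$.
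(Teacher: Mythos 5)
Your overall strategy matches the paper's: verify hypotheses (a)--(c) of Theorem \ref{thm:Drezettorsion}, apply Corollary \ref{cor:drezettorsion} with the contracting $\Cs$-action for the positive direction, and use a surjective character $\theta : G \to \Cs$ together with the equivalence (i)$\Leftrightarrow$(iii) for the converse. The converse step that you flag as ``the main obstacle'' is in fact already complete: Theorem \ref{thm:Drezettorsion} gives the equivalence outright, so exhibiting the trivial line bundle with equivariant structure twisted by $\theta$, on whose fiber at the origin $G$ acts through the infinite group $\operatorname{im}\theta$, immediately shows $\mathrm{Cl}(\mc{O}_{X,o})$ is not torsion; no further descent analysis or sharpness check is needed.

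There are, however, two genuine gaps. First, your verification of hypothesis (c) fails: from $\codim_X(X\setminus X_{\sm}) \geq 2$ and the fact that every fiber of $\xi$ has dimension at least $\dim G$, one can only conclude $\codim_N \xi^{-1}(X \setminus X_{\sm}) \leq \codim_X(X\setminus X_{\sm})$ --- the inequality goes the wrong way for your purpose, because fiber dimension is upper semicontinuous and fibers over the non-principal locus (e.g.\ the null cone over the cone point) may be much larger than $\dim G$, so the preimage of a codimension-two set can be a divisor or worse. The correct route, and the one place where $3$-largeness is essential, is \cite[Theorem 3.21]{SchwarzHamred}: it gives $X_{\sm} = X_{\mathrm{pri}}$, hence $\xi^{-1}(X_{\sm}) = N_{\mathrm{pri}}$, and (c) becomes identical to (b), i.e.\ Lemma \ref{lem:pricialcodim4}. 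Second, your justification that $N$ is locally factorial does not work as stated: Gorenstein plus rational (even symplectic) singularities plus a singular locus of codimension $\geq 2$ does not imply factoriality --- the $A_1$ surface singularity already fails. What is actually needed is that $\C[N]$ is a unique factorization domain, which for $n$-large representations with $n \geq 2$ follows from \cite[Proposition 6]{Avr-cisa} and \cite[Remark 2.4]{SchwarzKoszul} (a Grothendieck-type factoriality statement for complete intersections whose singular locus has sufficiently high codimension), as recorded in \cite[Section 3.1]{SchwarzHamred}.
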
   

\begin{proof}
	As noted in \cite[Section 3.1]{SchwarzHamred}, if $V$ is $n$-large, for $n \ge 2$, then it follows from \cite[Proposition 6]{Avr-cisa} and \cite[Remark 2.4]{SchwarzKoszul}  that $\C[N]$ is a unique factorization domain. In particular, $N$ is locally factorial.
	
	The fact that $V$ is $3$-large implies by \cite[Theorem 3.21]{SchwarzHamred} that $X_{\mathrm{sm}} = X_{\mathrm{pri}}$. Thus, $N_{\mathrm{pri}} = \xi^{-1}(X_{\sm})$. Hence, Lemma \ref{lem:pricialcodim4} implies that assumptions (a)--(c) of Theorem \ref{thm:Drezettorsion} hold in this case. 
	Note that $X$ carries a contracting $\Cs$-action, with unique fixed point $o$. Therefore, by Corollary \ref{cor:drezettorsion}, if $G$ has finite abelianization, then $X$ is $\Q$-factorial, and if it is perfect, then $X$ is locally factorial.
	
	
	Assume now that $G_{\text{ab}}$ is not finite. Then we can choose a surjective character $\theta: G \rightarrow \Cs$. In particular, $\theta^n \neq 1$ for all $n \geq 1$. Let $L$ be the $G$-equivariant line bundle on $N$ corresponding to the $(\C[N],G)$-module $\C[N] \o \theta$, where $G$ acts diagonally. Forgetting the equivariant structure, $L$ is the trivial line bundle. However, $G$ acts on the fiber $L_0$ as multiplication by $\theta$. In particular, this action does not factor through any finite group. Thus, we deduce from Theorem \ref{thm:Drezettorsion} that $\mathrm{Cl}(\mc{O}_{X,o})$ is not torsion.    
\end{proof}


The following proposition completes the proof of Theorem \ref{thm:mainshort}. 

\begin{prop}\label{prop:terminal}
	The variety $X$ has terminal singularities. 
\end{prop}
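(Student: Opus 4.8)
The plan is to establish terminality by combining two ingredients: a codimension estimate for the singular (non-principal, non-smooth) locus, and the fact that $X$ has, at worst, canonical (indeed symplectic) Gorenstein singularities, which is already supplied by Theorem \ref{thm:hss}. Recall that a symplectic singularity has canonical Gorenstein singularities, so to upgrade ``canonical'' to ``terminal'' it suffices (by a result of Namikawa; see also the general principle that a symplectic singularity is terminal if and only if its singular locus has codimension $\geq 4$) to show that $\codim_X \Sing(X) \geq 4$. Since we have already identified $X_{\sm} = X_{\mathrm{pri}}$ (used in the proof of Corollary \ref{cor:QfactorialHamred}), the task reduces to showing that the complement $X \setminus X_{\mathrm{pri}}$ has codimension at least $4$ in $X$.

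The key step is therefore the codimension bound, and this is where I would do the real work. By Lemma \ref{lem:VpriVstarVVstarpri}, $V \times V^*$ is $6$-large, and moreover $V_{\mathrm{pri}} \times V^* \subseteq (V\times V^*)_{\mathrm{pri}}$ and $V \times (V^*)_{\mathrm{pri}} \subseteq (V\times V^*)_{\mathrm{pri}}$. As in the proof of Lemma \ref{lem:pricialcodim4}, one has $N_{\mathrm{pri}} = (V \times V^*)_{\mathrm{pri}} \cap N$, and the non-principal locus of $N$ is contained in $\big((V \setminus V_{\mathrm{pri}}) \times (V^* \setminus (V^*)_{\mathrm{pri}})\big) \cap N$ together with the locus where orbits fail to be closed or stabilizers jump dimension. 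One then wants to bound the codimension in $N$ of the preimage (under $\xi$) of $X \setminus X_{\mathrm{pri}}$; since the generic fiber of $\xi$ has dimension $\dim G$ and all fibers have dimension $\geq \dim G$, a codimension-$c$ bound upstairs on a $\xi$-saturated set gives codimension $\geq c$ downstairs. So it suffices to show that the relevant bad locus in $N$ has codimension at least $4$. Using the $k$-principal and $k$-modular estimates for $V$ (with $k=3$), the $6$-largeness of $V \times V^*$, together with the regular-sequence argument of \cite[Lemma 9.7]{Schwarz} (as in the proof of Lemma \ref{lem:pricialcodim4}) to transfer codimension statements from $V \times V^*$ to the complete intersection $N = \mu^{-1}(0)$ — recall $N$ has codimension $\dim G$ in $T^*V$ — one should extract the bound: the non-principal locus, the non-closed-orbit locus, and the positive-dimensional-stabilizer locus each meet $N$ in codimension $\geq 4$. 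The modularity gives $\codim_{V\times V^*}(V\times V^*)_{(r)} \geq r + 6 \geq 7$; intersecting with $N$ (a regular sequence cut out by $\dim G$ equations, with the jump locus already accounting for the drop) and passing to $X$ should comfortably yield codimension $\geq 4$.

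The main obstacle I anticipate is the bookkeeping for the locus of non-closed orbits: unlike the dimension-of-stabilizer loci, ``$G\cdot x$ not closed'' is not obviously controlled by the $k$-large hypotheses on $V$ alone once we pass to $W = V\times V^*$ and then to $N \subseteq W$. One needs to argue — perhaps via Luna's slice theorem and a Hilbert--Mumford style analysis of destabilizing one-parameter subgroups, or by invoking the structure of $N_{\mathrm{pri}}$ versus $N'$ — that the non-closed-orbit locus in $N$ also has codimension $\geq 4$; here the doubling $V \rightsquigarrow V \times V^*$ (which turns $3$-large into $6$-large) is exactly what provides the extra room. A clean alternative, which I would try first, is to reduce directly to Lemma \ref{lem:pricialcodim4}'s method: show $N \setminus N' $ has codimension $\geq 4$ in $N$ by finding, for each irreducible component $Z$ of the bad locus, a regular sequence of length $4$ in $\C[N]$ vanishing on $Z$, exploiting that $\codim_{V}(V\setminus V')\geq 3$ and the symmetric bound for $V^*$, plus the transversality of the $\mu$-equations from \cite[Lemma 9.7]{Schwarz}. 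Once $\codim_X(X\setminus X_{\mathrm{pri}}) \geq 4$ is in hand, terminality follows immediately from $X_{\sm}=X_{\mathrm{pri}}$ and the symplectic (hence canonical Gorenstein) property of $X$, so no further work is needed.
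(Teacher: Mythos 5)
Your overall strategy is exactly the one the paper uses: $X$ is a symplectic singularity by Theorem \ref{thm:hss}, and by Namikawa's criterion a symplectic singularity is terminal if and only if its singular locus has codimension at least four, so everything reduces to the bound $\codim_X (X \setminus X_{\sm}) \geq 4$ together with the identification $X_{\sm} = X_{\mathrm{pri}}$. Where you diverge is in how that codimension bound is obtained: the paper does no work here at all, simply citing \cite[Theorem 4.4]{SchwarzHamred}, which states precisely that for a $3$-large representation the complement of $X_{\mathrm{pri}}$ in $X$ has codimension at least four. Your attempt to re-derive this from the $6$-largeness of $V \times V^*$ and the regular-sequence technique of Lemma \ref{lem:pricialcodim4} is left as a sketch, and the two places you hedge are genuine gaps: you never actually exhibit a length-four regular sequence in $\C[N]$ vanishing on a given bad component (the argument of Lemma \ref{lem:pricialcodim4} produces only two invariant functions, using that $\codim (V \setminus V_{\pri}) \geq 2$ in the quotient direction, and it is not automatic that one can find four such functions forming a regular sequence with the moment map equations), and, as you yourself note, the locus of non-closed orbits in $N$ is not directly controlled by the largeness hypotheses on $V$. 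These are exactly the technical points that Herbig--Schwarz--Seaton's Theorem 4.4 resolves, so the clean fix is to cite that result rather than reprove it; as written, your proof is incomplete at its central step.

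Once the codimension bound is in hand, the rest of your argument (canonical Gorenstein from the symplectic singularity property, upgraded to terminal by the codimension-four condition, via \cite{NamikawaNote}) matches the paper verbatim and is correct.
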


\begin{proof}
	Since we have assumed that $V$ is $3$-large, \cite[Theorem 3.21]{SchwarzHamred} says that $X_{\mathrm{sm}} = X_{\mathrm{pri}}$. Then it is a consequence of Theorem 4.4 of \textit{loc. cit.} says that the subvariety $X \smallsetminus X_{\mathrm{sm}}$ has codimension at least four in $X$. Moreover, Corollary 4.5 of \textit{loc. cit.} says that $X$ is a symplectic singularity. Therefore, it follows from \cite{NamikawaNote} that $X$ has terminal singularities.
\end{proof}

Finally, we note that:

\begin{lem}\label{lem:2largeXsing}
	If $G$ acts non-trivially on $V$ then the variety $X$ is singular. 
\end{lem}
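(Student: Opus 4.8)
The plan is to show that if $X = \mu^{-1}(0) \git G$ were smooth, then $G$ would act trivially on $V$, contradicting our hypothesis. Since $V$ is $3$-large, \cite[Theorem 3.21]{SchwarzHamred} tells us $X_{\mathrm{sm}} = X_{\mathrm{pri}}$, so $X$ is smooth exactly when $X = X_{\mathrm{pri}}$, i.e.\ when $N_{\mathrm{pri}} = N$. Combining this with the dimension count in the proof of Theorem \ref{thm:Drezettorsion} (the fibers of $\xi$ all have dimension $\geq \dim G$, with equality over the principal locus), if $X$ is smooth then actually every fiber of $\xi$ has dimension exactly $\dim G$ and every closed orbit is principal.

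First I would reduce to understanding the dimension of $X$. We have $\dim X = \dim N - \dim G$ (since $N$ has FPIG, the principal fibers have dimension $\dim G$ and, by the preceding, all fibers do if $X$ is smooth). On the other hand $N = \mu^{-1}(0) \subset T^*V = V \times V^*$ has dimension $2\dim V - \dim(G/K)$, where $K$ is the kernel of the action, because the generic fiber of the moment map $\mu$ is a coset of the annihilator-type subspace of dimension $\dim \mf{g} - \dim \mf{k}$ — concretely, $\mu$ has image of dimension $\dim(G/K)$ and generic fiber of dimension $2\dim V - \dim(G/K)$. Hence $\dim X = 2\dim V - 2\dim(G/K)$. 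If $G$ acts nontrivially, $\dim(G/K) > 0$ is impossible to rule out directly this way, so instead I would use the local structure.

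The cleaner approach: consider the point $o \in X$ which is the image of $0 \in N$ (the unique fixed point of the contracting $\Cs$-action). If $X$ were smooth, it would be smooth at $o$, hence $X$ is an affine space $\C^{\dim X}$ (being a smooth affine variety with a contracting $\Cs$-action to a point, with the symplectic form — actually it suffices that it's smooth, connected, and we can examine the tangent cone). More simply, apply Luna's slice theorem at $0 \in N$: since $G \cdot 0 = \{0\}$ is closed with stabilizer $G$, a Luna slice is all of $N$ itself, and the étale-local model of $X$ near $o$ is $N \git G$ — tautological here — but the key point is that smoothness of $X$ at $o$ forces, via the symplectic singularity structure, that the whole reduction behaves like the reduction of a trivial representation. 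I would instead argue directly: if $G$ acts nontrivially with $\dim(G/K) = 0$ then $G/K$ is a nontrivial finite group acting on $V$, and then $X = \mu^{-1}(0)\git (G/K)$; but $\mu^{-1}(0) = T^*V$ when $G/K$ is finite (the moment map is zero), so $X = T^*V / (G/K)$, a nontrivial quotient singularity, which is singular since the $3$-large (hence $2$-modular when $\dim G = 0$ is vacuous, but $k$-principal with $k \geq 3$) condition forces the action to be such that the quotient is singular — here one uses that a nontrivial finite group acting on a symplectic vector space with quotient that is $3$-principal has singular quotient (the fixed-point locus of any element has codimension $\geq 3$... actually codimension exactly $2$ for a symplectic reflection, but $3$-large rules those out, and without symplectic reflections the quotient is terminal hence singular, or one directly checks non-freeness forces singularity via the reflexivity/class group argument already developed). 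If $\dim(G/K) > 0$, then as argued above $\dim X = 2\dim V - 2 \dim(G/K) < 2\dim V$ is strictly smaller than $\dim N = 2\dim V - \dim(G/K)$ minus... and smoothness of $X$ combined with $X$ being a symplectic singularity of dimension $2(\dim V - \dim(G/K))$ is not yet a contradiction, so I would instead invoke \cite[Theorem 4.4]{SchwarzHamred} used in Proposition \ref{prop:terminal}: the singular locus of $X$ has codimension $\geq 4$, but if $X$ is smooth that locus is empty, which is consistent — so the real contradiction must come from a direct computation showing $X_{\mathrm{pri}} \neq X$.

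The main obstacle, and where I would focus the real work, is showing that the $3$-large hypothesis together with nontrivial action forces $N_{\mathrm{pri}} \neq N$, equivalently that there exists a closed orbit in $N$ that is not principal. I expect the argument to go: take $0 \in N$; its orbit is closed with stabilizer $G$ of dimension $\dim G > 0$ if $G$ is positive-dimensional (and the $3$-large condition forces $\dim G > 0 \Rightarrow$ the action is nontrivial anyway, as the remark after Corollary \ref{cor:nosympres} notes), while the principal stabilizer is the finite kernel $K$; hence $0$ is a closed non-principal point, so $N_{\mathrm{pri}} \neq N$, so $X_{\mathrm{sm}} = X_{\mathrm{pri}} \neq X$ and $X$ is singular. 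When $\dim G = 0$, $G$ is finite, $N = T^*V$, and $X = T^*V/G$; nontrivial action of the finite group $G/K$ on $T^*V$ gives a non-free action, and I would cite the standard fact (or the class-group computation above) that the quotient of a smooth variety by a nontrivial finite group acting non-freely in codimension $\geq 2$ (guaranteed by $3$-principality, which rules out reflections entirely) is not smooth — e.g.\ because $\pi_1$ of the smooth locus would be nontrivial while a smooth contractible-type affine space is simply connected, or more robustly because the quotient map is étale in codimension one forcing, by purity, a nontrivial étale cover contradicting factoriality/smoothness of affine space. This finite-group case is the part most likely to need care in citing the precise statement.
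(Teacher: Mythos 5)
Your final paragraph is the correct proof and is essentially the paper's: the orbit of $0 \in N$ is closed but not principal, hence $\xi(0) \notin X_{\mathrm{pri}}$ (no principal orbit can lie over $\xi(0)$, since principal orbits are closed and the unique closed orbit in that fibre is $\{0\}$ itself), and then $X_{\sm} = X_{\mathrm{pri}} \neq X$ by \cite[Theorem 3.21]{SchwarzHamred}. Two remarks. First, your case split at $\dim G = 0$ is unnecessary: when $G$ is finite the point $0$ is still non-principal, because the definition of a principal orbit also minimizes the number of connected components of the stabilizer, and $G_0 = G$ has strictly more components than the principal isotropy group, which for a $2$-large representation is the kernel $K \subsetneq G$ of the action. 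So the purity-of-branch-locus argument for the finite case, while valid (the $3$-principal condition does rule out pseudo-reflections, so the quotient map is \'etale in codimension one and smoothness of the quotient would force the action to be free), is an avoidable detour. Second, the first three paragraphs --- the dimension count, the Luna-slice discussion, and the appeal to the codimension-four bound of Theorem 4.4 --- are all acknowledged dead ends that contribute nothing to the argument and should be cut.
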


\begin{proof}
	The assumption that $G$ acts non-trivially on $V$ implies that
	$X_{\mathrm{pri}} \neq X$ because $0 \notin X_{\mathrm{pri}}$. Then the claim once again follows from \cite[Theorem 3.21]{SchwarzHamred}, which says that $X_{\mathrm{pri}} = X_{\sm}$.  
\end{proof}

The reader can check that Theorem \ref{thm:hss}, Theorem \ref{thm:mainshort} and Corollary \ref{cor:nosympres} all hold provided $V$ is $2$-large and $X_{\mathrm{sm}} = X_{\mathrm{pri}}$. The $3$-large condition is only required to guarantee, by \cite[Theorem 3.21]{SchwarzHamred}, that $X_{\mathrm{sm}} = X_{\mathrm{pri}}$. There exist examples of $2$-large representation that are not $3$-large, but for which $X_{\mathrm{sm}} = X_{\mathrm{pri}}$. In particular, the following is explained after the proof of Lemma 5.6 of \textit{loc. cit.}

\begin{lem}\label{lem:torus1largetechnical}
	If the connected component $G^{\circ}$ is a torus and $V$ is $1$-large then $X_{\mathrm{sm}} = X_{\mathrm{pri}}$. 
\end{lem}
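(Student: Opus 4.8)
The plan is to reduce the statement to known facts about Luna strata and categorical quotients by tori, following the strategy indicated in the reference to \cite[Lemma 5.6]{SchwarzHamred}. First I would recall that, since $G^{\circ} = T$ is a torus, $N = \mu^{-1}(0)$ and its quotient $X = N \git\, G$ decompose into finitely many Luna strata indexed by conjugacy classes of isotropy groups of closed orbits; over each stratum the quotient map is a fiber bundle with fiber a quotient by the (reductive) isotropy group. The inclusion $X_{\mathrm{sm}} \supseteq X_{\mathrm{pri}}$ is automatic whenever $V$ is $2$-large, so the content is the reverse inclusion: a point $x$ lying in a non-principal stratum must be a singular point of $X$.

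The key step is to analyze a closed orbit $G \cdot y \subseteq N$ whose stabilizer $H := G_y$ strictly contains the principal isotropy group (equivalently, by the FPIG hypothesis together with the $1$-large condition, $\dim H > 0$ or $H$ has more components than the principal one). By Luna's slice theorem, an \'etale neighborhood of $x$ in $X$ is modeled on $(T_y N / T_y(G\cdot y)) \git\, H$, where $H$ acts linearly on the slice representation $S$. Here I would use that $T$ is a torus so that $H^{\circ}$ is again a torus, and invoke the structure of $N$ near $y$ coming from the moment map: the slice representation $S$ is itself of the form $\mu_H^{-1}(0)$ inside a cotangent-type $H$-representation (a symplectic slice), because the Hamiltonian reduction construction is compatible with Luna slices. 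Thus the local model of $X$ at $x$ is again a Hamiltonian reduction of the same shape, but now by the smaller group $H$ acting with a positive-dimensional or disconnected isotropy contribution; one then checks that such a reduction is singular at the origin whenever the $H$-action on the relevant slice is non-trivial, which holds precisely because we left the principal locus. The $1$-large hypothesis on $V$ is what guarantees the slice representation is still large enough (at least $1$-large, or FPIG with the needed codimension bounds) for this argument to close up.

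I expect the main obstacle to be verifying that the local model at a non-principal point is genuinely singular — i.e., that one cannot have an "accidental" smooth point in a deeper Luna stratum. For a torus this is manageable: the quotient of a faithful torus representation is smooth only when the representation is (after splitting off trivial summands) equivalent to a coordinate representation of $(\Cs)^r$, a very restrictive condition which one can rule out on the non-principal strata using the $1$-large codimension estimates on $V_{(r)}$. The bookkeeping of isotropy groups and the compatibility of the symplectic slice with $\mu^{-1}(0)$ are routine once set up, so I would present this as: (1) Luna stratification and the automatic inclusion $X_{\mathrm{pri}} \subseteq X_{\mathrm{sm}}$; (2) the symplectic slice model at a closed orbit; (3) smoothness of a torus quotient forces the coordinate form, contradicting non-principality; hence $X_{\mathrm{sm}} \subseteq X_{\mathrm{pri}}$, giving equality. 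Since the excerpt attributes this to the discussion after \cite[Lemma 5.6]{SchwarzHamred}, I would simply cite that argument for the details rather than reproducing it in full.
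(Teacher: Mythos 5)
The paper gives no proof of this lemma at all: it is imported directly from the discussion following the proof of Lemma~5.6 in \cite{SchwarzHamred}, so there is no in-paper argument to compare yours against. Your sketch (Luna stratification, symplectic slice at a non-principal closed orbit, and the rigidity of smooth torus quotients) is a reasonable reconstruction of the kind of argument that reference uses, and since you ultimately defer to the same citation for the details --- in particular for the one genuinely nontrivial step, that the local Hamiltonian-reduction model at a non-principal point is actually singular --- your proposal is in effect the same as what the paper does.
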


\begin{example}\label{ex:strangleexample}
	Let $G = \Cs \rtimes \Z_2$, where $s \in \Z_2$ acts on $\Cs$ by $s(t) = t^{-1}$. Then $[G,G] = \Cs < G$ and $G/[G,G] \cong \Z_2$ is finite.
 Let $V = \C^{2n}$ for $n \geq 2$ with coordinates $x_1, \ds, x_{2n}$ such that 
	$$
	t \cdot x_i = t x_i, \quad t \cdot x_{i + n} = t^{-1} x_{i+n}, \quad s \cdot x_i = x_{i +n}, \quad s \cdot x_{i+n} = x_i \quad \textrm{for $1 \le i \le n$.}
	$$
	Then $V$ is a $n$-large representation of $G$ and Lemma \ref{lem:torus1largetechnical} implies that $X_{\mathrm{sm}} = X_{\mathrm{pri}} = X \smallsetminus\{ 0 \}$. We deduce from Theorem \ref{thm:mainshort} that $\mu^{-1}(0) \git \, G$ is terminal and 
$\Q$-factorial. Moreover, it does not admit any symplectic resolution. 
	
	If, instead, one takes $G^{\circ} = \Cs$ acting on the same representation, then this is once again $2$-large and $X_{\mathrm{sm}} = X_{\mathrm{pri}}$. However, Theorem \ref{thm:mainshort} says that $X := \mu^{-1}(0) \git \, G^{\circ}$ is no longer $\Q$-factorial. Moreover, $X$ is isomorphic to the minimal nilpotent orbit in $\mf{sl}_{2n}$ and hence has a symplectic resolution given by $T^* \mathbb{P}(V)$; \cite[Corollary 3.19]{FuNilpotentResolutions} then provides another proof that $X$ is not $\Q$-factorial in this case. This gives an example of a symplectic singularity that is not $\Q$-factorial, but whose quotient by $\Z_2$ is $\Q$-factorial.     

One can also produce examples where $X$ is not $\Q$-factorial but $X/H$ is locally factorial for $H$ finite (and both are terminal symplectic singularities). For this let $G$ be a perfect reductive group, such as $(\Cs)^4 \rtimes A_5 < \mathrm{SL}_5$, and let $V$ be any $2$-large representation of $G$ with $X_{\mathrm{sm}} = X_{\mathrm{pri}}$, e.g., $V=\text{Res}^{\mathrm{SL}_5}_G (\C^5)^2$ in this case. Then set $X := \mu^{-1}(0) \git \, G^\circ$ and $H=G/G^\circ = \pi_0(G)$; in this example, $H=A_5$.

	
\end{example}

\section{Remarks on disconnected groups}\label{s:disconn}

We have chosen to work with disconnected groups partly since, as
illustrated by example \ref{ex:strangleexample}, it leads to strange new
behavior. In fact, it is also possible to deduce Theorem
\ref{thm:mainshort}.(b) for groups $G$ whose connected component
$G^\circ$ of the identity is semi-simple, directly from the case of
$G^\circ$. More generally, if $Y$ is an irreducible symplectic
singularity, and $H$ a finite group of symplectic automorphisms of
$Y$, by \cite[Proposition 2.4]{Beauville} $Y/H$ is also a symplectic
singularity. If $Y$ is additionally terminal, by \cite{NamikawaNote},
$Y$ has singularities in codimension at least four, and $Y/H$ is
terminal if and only if it has the same property. Thus, $Y/H$ is
terminal if and only if $Y$ is terminal and the non-free locus of $H$
on $Y$ has codimension at least four. On the other hand, if $Y$ is
$\Q$-factorial, so is $Y/H$: see, e.g., \cite[Theorem 3.8.1]{Benson}
where $Y$ and $Y/H$ need only be normal, not symplectic
singularities. In our situation, the result follows from the
$\Q$-factorial version of Theorem \ref{thm:Drezettorsion} (see Remark
\ref{rem:drezet-qfact}), specializing to finite groups. 
When $Y$ has a contracting
$\Cs$-action which commutes with $H$, then $m \cdot \text{Weil}(Y) \subseteq \text{Cartier}(Y)$ implies that $|H_{\text{ab}}| \cdot m  \cdot \text{Weil}(Y/H) \subseteq \text{Cartier}(Y/H)$ (by Corollary \ref{cor:drezettorsion}; the statement also follows from \cite[Theorem 3.8.1]{Benson}).

Put together, we see that the quotient of a $\Q$-factorial terminal
singularity by a finite group of symplectomorphisms acting freely
outside codimension at least four is also a $\Q$-factorial terminal
symplectic singularity. In particular, if such a quotient is singular
(which is true unless $Y$ is smooth and $H$ acts freely), then there
is no symplectic resolution of singularities. This generalizes, and
provides a completely different proof of, the theorem of Verbitsky
\cite{Verbitsky}, which considered the case that $Y$ is a symplectic
vector space.  (Note, though, that the nonexistence of symplectic
resolutions in the general case follows by formal localization from
Verbitsky's theorem if $H$ has nontrivial isotropy groups on the
smooth locus of $Y$).

Now suppose that $V$ is a $2$-large representation of the reductive group $G$. Set $\xi : \mu^{-1}(0) \rightarrow Y := \mu^{-1}(0) \git \, G^{\circ}$ and $H = G /
G^{\circ} K$, for $K$ the kernel of the action $G$ on $V$. By \cite[Corollary 7.7]{Schwarz}, $G/K$ acts freely on the principal
locus $V_{\pri}$, hence also on $\mu^{-1}(0)_{\pri}$.  By the proof
of \cite[Theorem 4.4]{Schwarz}, the complement to the image $U := \xi(\mu^{-1}(0)_{\pri})$ has codimension at least four. Since
$\mu^{-1}(0)_{\pri}$ consists of closed orbits,
$H$ acts freely on $U$. If in addition $G^{\circ}$ is semi-simple, then by Theorem \ref{thm:mainshort}, $Y$ is locally factorial. Then we are in the situation of the previous paragraph, so that $X:=Y/H$ is a $\Q$-factorial terminal symplectic singularity. This verifies Theorem \ref{thm:mainshort}.(b), for $G^\circ$ semi-simple, assuming only the connected case.

Note that such considerations appear insufficient for deducing Theorem \ref{thm:mainshort}.(a) from the connected case, since as Example \ref{ex:strangleexample} shows, in general if $Y$ is a non-$\Q$-factorial singularity then a finite quotient $Y/H$ can nonetheless be $\Q$-factorial (even in the case of terminal symplectic singularities).


\section{Open questions}\label{s:openq}
The above suggests the following possible generalizations:
\begin{question}\label{q:vglob}
  Suppose we replace a $3$-large representation $V$ of a reductive group $G$ by a smooth irreducible affine
  variety $Y$ such that, at every $y \in Y$ such that
  $G \cdot y \subseteq Y$ is closed, the representation $T_yY$ of
  $G_y$ is $3$-large. For moment map $\mu: T^* Y \to \mathfrak{g}^*$, do the
  analogs of Theorems \ref{thm:hss} and \ref{thm:mainshort} hold? 
 \end{question}
Next, by Lemma \ref{lem:VpriVstarVVstarpri}, if $V$ is a $k$-large
representation of $G$, then $T^* V$ is $2k$-large.
\begin{question} Suppose that a reductive group $G$ acts symplectically on a
  representation $U$ of $G$ which is now assumed to be $6$-large. For moment map
  $\mu: U \to \mathfrak{g}^*$, do the analogues of Theorems
  \ref{thm:hss} and \ref{thm:mainshort} hold for the reduction
  $\mu^{-1}(0) \git \, G$?
\end{question}
Of course, we can put the two questions together:
\begin{question}\label{q:glob-sav}
  If $U$ is a symplectic irreducible affine variety, or more generally an affine symplectic
  singularity, with a Hamiltonian action of a reductive group $G$, and
  $T_u U$ is $6$-large for every $u \in U$ with
  $G \cdot u \subseteq U$ closed, then do Theorems \ref{thm:hss} and
  \ref{thm:mainshort} hold for the Hamiltonian reduction
  $\mu^{-1}(0) \git \, G$? If $U$ is (singular and) conical with cone point $o \in U$ and $\Cs$-action commuting with the action of $G$, it is enough to ask that
$T_o U$ be $6$-large.
\end{question}

If $G$ is finite, the above questions all have affirmative answers by Section \ref{s:disconn}.  If the questions have affirmative answers in general, then whenever $G_{\text{ab}}$ is finite, suitably large Hamiltonian reductions by $G$ do not admit symplectic resolutions.

Finally, we can ask about Hamiltonian reductions at nonzero coadjoint orbits.  Recall
that, if $V$ is a $2$-large representation of a reductive group $G$, then $\mu: T^*V \to \mathfrak{g}^*$ is flat
by \cite[Proposition 9.4]{Schwarz}.
\begin{question}
Suppose that $V$ is a $2$-large representation of a reductive group $G$ and $\mu: T^* V \to \mathfrak{g}^*$ the moment map. 
Is the reduction $\mu^{-1}(\chi) \git \, G$ $\Q$-factorial for generic characters $\chi: \mathfrak{g} \to \C$?
More generally, if $U$ is a $4$-large symplectic representation of $G$ and $\mu: U \to \mathfrak{g}^*$ the moment map, is $\mu^{-1}(\chi) \git \, G$ $\Q$-factorial for generic $\chi$? The same questions apply also in the global setting (following Questions \ref{q:vglob} and \ref{q:glob-sav}).
\end{question}
If the answer is affirmative and $\mu^{-1}(0) \git \, G$ has symplectic singularities, then a symplectic smoothing exists if and only if it can
be obtained by varying the moment map parameter.
Similarly, it is also interesting to replace deformations (varying
$\chi$) as above by partial resolutions, obtained by replacing the
affine quotient above by a GIT quotient corresponding to
a character $\theta: G \to \Cs$: are the resulting quotients $\Q$-factorial for generic
$\theta$? If so, then whenever symplectic resolutions exist, they can be obtained by varying $\theta$.  The $2$-large property is important here:
\begin{example}
Let $V=(\mathfrak{sl}_2)^2$, considered as a representation of $G=PGL_2$. 
For $\mu: T^* V \to \mathfrak{g}$, by \cite{KaledinLehn},
the quotient $\mu^{-1}(0) \git \, G$ identifies with the locus of square-zero matrices in $\mathfrak{sp}_4$, and in particular is
a symplectic singularity which is not terminal. (The singular locus is the codimension-two locus of rank-one matrices in $\mathfrak{sp}_4$). In particular, $V$ is not $2$-large (in fact, it is $1$-large).
 Note that $G$ is simple, and one cannot obviously construct any symplectic resolution via GIT.  However, as explained in \cite[Remark 4.6]{KaledinLehn}, following \cite{OGr-K3} in the global situation of moduli spaces of sheaves on $K3$ surfaces, blowing up the
reduced singular locus of $X = \mu^{-1}(0) \git \, G$ produces a symplectic resolution. This is is also realized by the partial Springer resolution  with source the cotangent bundle of the Lagrangian Grassmannian in $\C^4$.  
We note that generalizations
of this construction to quiver varieties are given in \cite{BellSchedQuiver}.
\end{example}

\def\cprime{$'$} \def\cprime{$'$} \def\cprime{$'$} \def\cprime{$'$}
\def\cprime{$'$} \def\cprime{$'$} \def\cprime{$'$} \def\cprime{$'$}
\def\cprime{$'$} \def\cprime{$'$} \def\cprime{$'$} \def\cprime{$'$}
\def\cprime{$'$} \def\cprime{$'$}


\end{document}